
\documentclass{amsart}

\newtheorem{theorem}{Theorem}[section]
\newtheorem{lemma}[theorem]{Lemma}

\theoremstyle{definition}
\newtheorem{definition}[theorem]{Definition}
\newtheorem{example}[theorem]{Example}

\theoremstyle{remark}
\newtheorem{remark}[theorem]{Remark}

\numberwithin{equation}{section}



\usepackage{color}
\usepackage{amsmath}
\usepackage{amsfonts}
\usepackage{textcomp}
\usepackage{epsfig}
\usepackage{url}

\newcommand{\OM}{\Omega}
\newcommand{\RE}{\mathbb{R}}
\newcommand{\PO}{\mathbb{P}}

\newcommand{\PDif}[2]{\frac{\partial {#1}}{\partial {#2}}}
\newcommand{\Lsp}{\textrm{L}}
\newcommand{\Hsp}{\textrm{H}}
\newcommand{\Tsp}{\textrm{T}}

\newcommand{\Arr}[2]{
	\left(
		\begin{array}{c}
		{#1} 
		\\
		{#2}
		\end{array}
	\right)
}
\newcommand{\Arrtri}[3]{
	\left(
		\begin{array}{c}
		{#1} 
		\\
		{#2}
		\\
		{#3}		
		\end{array}
	\right)
}

\newcommand{\MATT}[4]{
	\left[
		\begin{array}{cc}
		{#1} 
		&
		{#2}
		\\
		{#3} 
		&
		{#4}		
		\end{array}
	\right]
}
\newcommand{\MATnine}[9]{
	\left[
		\begin{array}{ccc}
		{#1} & {#2} & {#3}
		\\
		{#4} & {#5} & {#6}
		\\
		{#7} & {#8} & {#9}		
		\end{array}
	\right]
}

\newcommand{\half}{\frac{1}{2}}

\newcommand{\average}[1]{\ensuremath{\lbrace\!\!\lbrace#1\rbrace\!\!\rbrace} } 
\newcommand{\jump}[1]{\ensuremath{[\![#1]\!]} }

%
\newcommand{\Th}{\mathcal{T}_h} 
\newcommand{\Ti}[1]{\mathcal{T}_{#1}} 

\newcommand{\SIG}{{\boldsymbol \sigma} }

\newcommand{\NOR}{{\boldsymbol n} }
\newcommand{\fB}{{\boldsymbol f} }
\newcommand{\vB}{{\boldsymbol v} }
\newcommand{\uB}{{\boldsymbol u} }
\newcommand{\wB}{{\boldsymbol w} }
\newcommand{\BO}[1]{{\boldsymbol #1} }
\newcommand{\lB}{{\boldsymbol \lambda} }
\newcommand{\phiB}{{\boldsymbol \varphi} }

\newcommand{\Hopt}{\mathcal{H}}

\newcommand{\Ropt}{\mathcal{R}}
\newcommand{\EPS}{ \mathcal{E} }

\newcommand{\GAM}{ \Gamma }

\newcommand{\Bnorm}[2]{\Vert {#1} \Vert_{#2} }
\newcommand{\Lnorm}[1]{{ \Vert #1 \Vert }}
\newcommand{\norm}[1]{{ \Vert #1 \Vert }}

\newcommand{\redd}[1]{{#1}}
\theoremstyle{proposition}
\newtheorem{proposition}{Proposition}
\newtheorem{corollary}[theorem]{Corollary}
\newtheorem{algorithm}[theorem]{Algorithm}

\begin{document}

\title{Analysis of Schwarz methods for a hybridizable discontinuous
  Galerkin discretization: the many subdomain case}

\author{Martin J.~Gander}
\address{Section de math\'ematiques, Universit\'e de Gen\`eve, Geneva,
Switzerland}
\email{martin.gander@unige.ch}

\author{Soheil Hajian}
\address{Institut f\"ur Mathematik, Humboldt-Universit\"at zu Berlin,
  Berlin, Germany}
\email{soheil.hajian@hu-berlin.de}

\subjclass[2000]{65N22, 65F10, 65F08, 65N55, 65H10}

\date{--}


\keywords{Additive Schwarz, optimized Schwarz, discontinuous Galerkin methods,
scalability, parabolic problems}

\begin{abstract}
  Schwarz methods are attractive parallel solution techniques for
  solving large-scale linear systems obtained from discretizations of
  partial differential equations (PDEs). Due to the iterative nature
  of Schwarz methods, convergence rates are an important criterion to
  quantify their performance. Optimized Schwarz methods (OSM) form a
  class of Schwarz methods that are designed to achieve faster
  convergence rates by employing optimized transmission conditions
  between subdomains.  It has been shown recently that for a
  two-subdomain case, OSM is a natural solver for hybridizable
  discontinuous Galerkin (HDG) discretizations of elliptic PDEs. In
  this paper, we generalize the preceding result to the many-subdomain
  case and obtain sharp convergence rates with respect to the mesh
  size and polynomial degree, the subdomain diameter, and the
  zeroth-order term of the underlying PDE, which allows us for the
  first time to give precise convergence estimates for OSM used to
  solve parabolic problems by implicit time stepping. We illustrate
  our theoretical results with numerical experiments.
\end{abstract}

\maketitle
\markboth{Martin J.~Gander and Soheil Hajian}{OSM and DG}
\section{Introduction}

For the numerical treatment of a parabolic equation, e.g., 
\begin{equation}\label{eq:pde}
  \begin{array}{rcll}
    \displaystyle
    \PDif{u}{t} - \nabla \cdot ( a(x) \nabla u ) &=& 
    f(x,t)\quad & \textrm{in $\OM \times (0,T]$},\\
    u(x,t) &=& 0 & \textrm{on $\partial \OM \times (0,T]$}, \\
    u(x,0) &=& g(x) & \textrm{on $\OM$},
  \end{array}
\end{equation}
one often first discretizes the spatial dimension using a finite
difference (FD), finite element (FE) or discontinuous Galerkin (DG)
method. This approach, called {\it method of lines}, results in a {\it
  semi-discrete} system where the unknown $u(x,t)$ is approximated by
a finite dimensional vector $\uB_h(t)$ and the differential operator
$-\nabla \cdot ( a(x) \nabla )$ by a stiffness matrix which we denote
by $A_h$. More precisely we then have
\begin{equation}\label{eq:semidisc}
  \PDif{\uB_h(t)}{t} + A_h \uB_h(t) = \fB(t),
\end{equation}
and $\uB_h(t=0) = \BO{g}_h$. We then discretize in time using for
example a backward Euler method with time step $\tau$, i.e.,
\begin{equation}
\label{eq:linsystem}
	\Big( \frac{1}{\tau} M_h + A_h \Big) \uB_n =
        \frac{1}{\tau} M_h \uB_{n-1} + \fB(t_n),
\end{equation}
where $M_h$ is called the mass matrix and $\uB_n$ is an approximation
of $\uB_h(t_n)$. Therefore, at each time-step, a linear system has to be
solved. 

One approach for solving (\ref{eq:linsystem}) efficiently 
is to use a domain decomposition method where we decompose the
original spatial domain $\OM$ into overlapping or non-overlapping
subdomains and then solve smaller linear systems in parallel. In this
paper we choose the spatial discretization to be a DG method, more
precisely a hybridizable interior penalty (IPH) method.

It has been shown that optimized Schwarz methods are attractive and
natural solvers for hybridizable DG discretizations, see
\cite{hajian2014analysis,hajian2014}. This is due to the fact that
hybridizable DG methods impose continuity across elements and
subdomains using a Robin transmission condition, see
\cite{hajian2013block}. Robin transmission conditions and a
suitable choice of the Robin parameter are the core of OSM to achieve
fast convergence \cite{ganderos}. Special care is needed when OSM is
used as a solver for classical FEM when cross-points are present, see,
e.g., \cite{loisel, gander2012best, gander2013applicability,
  gander2015CPDD}. Those are points which are shared by more than two
subdomains. This is not the case when we apply OSM to a hybridizable
DG method, e.g.,~IPH, since subdomains only communicate if they have a
non-zero measure interface with each other.

We generalize here our previous results for a two subdomain
configuration in \cite{hajian2014analysis} to the case of many
subdomains, perform an analysis with respect to the polynomial degree
of the IPH, and study for the first time the influence of the
time-step $\tau$ on the performance and scalability of the OSM.
\redd{
  However we do not make an attempt to optimize the solver with
  respect to the jumps in $a(x)$ coefficient and therefore we work,
  without loss of generality in this context, with
  \begin{equation*}
    ( \eta - \Delta ) u = f \quad \text{in } \Omega,
  \end{equation*}
  where $\eta = {\tau}^{-1}$ is a constant.
}
In Section \ref{sec:iph} we recall the definition of IPH in a {\it
  hybridizable} formulation and introduce the domain decomposition
settings. In Section \ref{sec:optIPH} we introduce an OSM for IPH and
analyze its convergence properties. The main contributions of the
paper are Theorem \ref{thm:main}, Corollary \ref{cor:osm} and the
refined analysis in Section \ref{sec:sharpest}. We validate our
theoretical findings by performing numerical experiments in Section
\ref{sec:num}.
\section{The IPH method} \label{sec:iph}
\redd{
IPH was first introduced in \cite{ewing} as a stabilized discontinuous
finite element method and later was studied as a member of the class
of hybridizable DG methods in \cite{cockburn}. It has been shown that
it is equivalent to a method called Ultra Weak Variational Formulation
(UWVF) for the Helmholtz equation; see \cite{MZA:8194617}. IPH also
fits into the framework developed in \cite{dgunified} for a unified
analysis of DG methods.
}

In this section we recall the IPH method and its properties. We can
define many DG methods by two equivalent formulations, namely the {\it
  primal} and {\it flux} formulation, see for instance
\cite{dgunified}. However there is also a third equivalent formulation
for a class of hybridizable DG methods introduced in
\cite{cockburn}. For the sake of simplicity we use only the hybridized
formulation for IPH and refer the reader to
\cite{hajian2014, lehrenfeld2010hybrid} for its primal and flux formulations.

\subsection{Notation}

We now define the necessary operators and function spaces needed to
analyze DG methods. We follow the notation in \cite{dgunified}. Let
$\Th = \{ K \} $ be a shape-regular and quasi-uniform triangulation of
the domain $\OM$. We denote the diameter of an element of the
triangulation by $h_K := \max_{x,y \in K} |x-y|$ and define $h :=
\max_{K \in \Th} h_K$. If $e$ is an edge of an element, we denote the
length of that edge by $h_e$. The quasi-uniformity of the mesh implies
$h \approx h_K \approx h_e$.
Let us denote the set of interior edges shared by two elements in
$\Th$ by $\EPS^0$, i.e., $ \EPS^0 := \{ e = \partial K_1 \cap \partial
K_2, \forall K_1, K_2 \in \Th \}$. Similarly we define the set of
boundary edges by $\EPS^\partial$ and all edges by $\EPS :=
\EPS^\partial \cup \EPS^0$.

We seek a DG approximation which belongs to the finite dimensional
space
\begin{equation}
  V_h := 
  \left\lbrace v \in \Lsp^2(\OM) :
  \left. v \right|_{K} \in \PO^k(K), \forall K \in \Th \right\rbrace,
\end{equation}
where $\PO^k(K)$ is the space of polynomials of degree less than $k$
in the simplex $K \in \Th$. Note that a function in $V_h$ is not
necessarily continuous. More precisely $V_h$ is a finite dimensional
subspace of a broken Sobolev space $\Hsp^l(\Th) := \prod_{K \in \Th}
\Hsp^l(K)$, where $\Hsp^l(K)$ is the usual Sobolev space in $K \in \Th$
and $l$ is a positive integer. Since $\Hsp^l(\Th)$ contains
discontinuous functions, its trace space along $\EPS^0$ can be
double-valued. We define the trace space of functions in $\Hsp^l(\Th)$
by $\Tsp(\EPS) := \prod_{K \in \Th} \Lsp^2(\partial K)$. Observe that
$q \in \Tsp(\EPS)$ can be double-valued on $\EPS^0$ but it is
single-valued on $\EPS^\partial$.

We now define two trace operators: let $q \in \Tsp(\EPS)$ and $q_i :=
q |_{\partial K_i}$. Then on $e = \partial K_1 \cap \partial K_2$ we
define the average and jump operators
\begin{equation*}
  \begin{array}{lrlr}
    \average{q} := \frac{1}{2} ( q_1 + q_2 ), 
    & 
    &
    \jump{q} := q_1 \, \NOR_1 + q_2 \, \NOR_2,
    & 
  \end{array}
\end{equation*}
where $\NOR_i$ is the unit outward normal from $K_i$ on $e \in
\EPS^0$. Note that the jump and average definition is independent of
the element enumeration. Similarly for a vector-valued function $\SIG \in \left[
  \Tsp(\EPS) \right]^2 $ we define on interior edges
\begin{equation*}
  \begin{array}{lrlr}
    \average{\SIG} := \frac{1}{2} ( \SIG_1 + \SIG_2 ), 
    &
    &
    \jump{\SIG} := \SIG_1 \cdot \NOR_1 + \SIG_2 \cdot \NOR_2.
    &
  \end{array}
\end{equation*}
On the boundary, we set the average and jump operators to
$\average{\SIG} := \SIG$ and $\jump{q} = q \, \NOR$ and we do not need
to define $\average{q}$ and $\jump{\SIG}$ on $e \in \EPS^\partial$
since they do not appear in the discrete formulation.

Since $\Hsp^l(\Th)$ contains discontinuous functions, we need to define
some piecewise gradient operators. For all $u, v \in \Hsp^l(\Th)$ we define
\begin{equation*}
  \int_{\Th} \nabla u \cdot \nabla v := \sum_{K \in \Th} \int_{K}
  \nabla u \cdot \nabla v.
\end{equation*}
For $a, b \in \Tsp(\EPS)$ and single-valued on $\EPS^0$ we define the
edge integrals by
\begin{equation*}
  \int_{\EPS} a \, b  := \sum_{e \in \EPS} \int_{e} a \, b.
\end{equation*}
\subsection{Domain decomposition setting}
In order to define IPH in a hybridizable form we first decompose the
domain into $N_s$ non-overlapping subdomains $\{ \OM_i
\}_{i=1}^{N_s}$. We denote the interface between subdomains by
$\Gamma$ and assume the interface is a subset of internal edges,
$\EPS^0$. More precisely, we denote the interface between two
subdomains by $\Gamma_{ij} := \partial \OM_i \cap \partial \OM_j$ for
$i \not = j$ and the global interface by $\Gamma := \cup_{i \not= j}
\Gamma_{ij} \subset \EPS^0$. In other words the domain decomposition
does not go through any element of the triangulation. For convenience we
denote the interface belonging to subdomain $\OM_i$ by $\Gamma_i :=
\cup_{j \in N(i)} \Gamma_{ij}$ where $N(i)$ is a set containing
neighbors of the subdomain $\OM_i$, for an example see Figure
\ref{fig:ddmesh}.
\begin{figure}
  \centering
  \epsfig{file=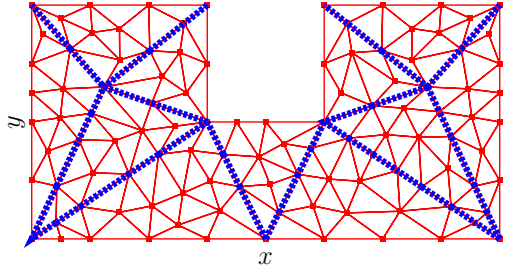, scale=0.7}
  \caption{An unstructured mesh with the interface $\Gamma$
    (blue-dashed) and cross-points.}
  \label{fig:ddmesh}
\end{figure}

This domain decomposition induces a set of non-overlapping
triangulations $\{ \Ti{i} \}_{i=1}^{N_s}$. Moreover we can define local
DG spaces on each subdomain and represent each function in $V_h$ as a
direct sum,
\begin{equation*}
  V_h = V_{h,1} \oplus V_{h,2} \oplus \hdots \oplus  V_{h,N_s},
\end{equation*}
where $V_{h,i}$ for $i=1, \hdots , N_s$ is a local space defined as
\begin{equation*}
  V_{h,i} := \big\{ v \in \Lsp^2(\OM_i) : v |_{K \in \Ti{i}} \in \PO^k(K) \big\}.
\end{equation*}
We also need a finite dimensional space on the interface which we
denote by $\Lambda_h$,
\begin{equation*}
  \Lambda_h := 
  \big\{ \varphi \in \Lsp^2(\GAM) : \left. \varphi \right|_{e \in \GAM} \in \PO^k(e) \big\}.
\end{equation*}
For the analysis of our Schwarz methods we also need to define local
spaces on $\Gamma_i$ for all $i=1,...,N_s$,
\begin{equation*}
  \Lambda_i := \big\{ \varphi \in \Lsp^2(\Gamma_i) : 
  \left. \varphi \right|_{e \in \Gamma_i} \in \PO^k(e) \big\},
\end{equation*}
and its global counterpart $\prod_{i=1}^{N_s} \Lambda_i$.  Note that
$\Lambda_h$ is single-valued across $\Gamma$ while $\prod_{i=1}^{N_s}
\Lambda_i$ is double-valued. We denote the maximum diameter of the
subdomains by $H$ and the diameter of the mono-domain $\OM$ by
$H_\OM$. We assume $0 < h \leq H < H_\OM$. For convenience we define a
function for the set of neighboring subdomains of $\Omega_i$ and
denote it by $N(i)$.

\subsection{Hybridizable formulation} 
We now present IPH in a hybridizable form. A DG method is hybridizable
if one can eliminate the degrees of freedom inside each element and
obtain a linear system in terms of a {\it single-valued} function
along edges. Not all DG methods can be written in a hybridized form,
for instance the classical IP method is not hybridizable. A hybridization
procedure for DG methods has been developed and studied in
\cite{cockburn} where IPH is also included.

In a DG context the continuity of the exact solution is imposed weakly
through a Nitsche penalization technique. Penalization is regulated
through a parameter, $\mu \in \Tsp(\EPS)$ and scaled like $\mu =
\alpha k^2/h$ for $\alpha > 0$, independent of $h$ and $k$ and
sufficiently large. This choice of $\mu$ guarantees coercivity of the
DG bilinear form and optimal approximation. Let $(u,\lambda),
(v,\varphi) \in V_h \times \Lambda_h$ and $u_i, v_i \in V_{h,i}$ be
the restriction of $u$ and $v$ to $\OM_i$. Then the IPH bilinear form
reads
\begin{equation} \label{eq:IPHbilin}
  a( (u,\lambda), (v,\varphi) ) := a_{\GAM}(\lambda,\varphi) + 
  \sum_{i=1}^{N_s} \Big( a_i(u_i,v_i) 
  + a_{i\GAM}(v_i,\lambda) + a_{i\GAM}(u_i,\varphi) \Big),
\end{equation}
where 
\begin{equation}
  \label{eq:deftildea}
    a_{\GAM}(\lambda,\varphi) := \mu \sum_{i=1}^{N_s} \int_{\Gamma_i} \lambda \, \varphi,
    \quad
    {a}_{i\GAM}(v_i,\varphi) := \int_{\Gamma_i} \Big(
    \PDif{v_i}{\NOR_i} - \mu v_i \Big) \varphi,
\end{equation}
and the local solvers $a_i(\cdot, \cdot)$ are defined as
\begin{equation} \label{eq:localsolvers}
  \begin{array}{rcl}
  a_i(u_i,v_i) &:=&  
  \displaystyle 
  \int_{\Ti{i}} \eta \, u_i \, v_i +  \nabla
  u_i \cdot \nabla v_i 
  - \int_{\EPS_i^0} \average{ \nabla u_i } \cdot \jump{v_i}
  + \average{ \nabla v_i } \cdot \jump{u_i}
  \\
  && 
  \displaystyle
  + \int_{\EPS^0_i} \frac{\mu}{2}\jump{u_i} \cdot \jump{v_i}
  - \frac{1}{2 \mu} \jump{ \nabla u_i} \jump{ \nabla v_i}
  \displaystyle
  + \int_{\partial \OM_i} \mu \, u_i \, v_i - \PDif{u_i}{\NOR_i} v_i - \PDif{v_i}{\NOR_i} u_i.
  \end{array}
\end{equation}
This is an IPH discretization of the model problem in $\OM_i$, and
$\partial \OM_i$ is treated as a Dirichlet boundary. Observe that
$a_i(\cdot,\cdot)$ and $a_\Gamma(\cdot,\cdot)$ are symmetric and
therefore $a(\cdot,\cdot)$ is symmetric too. 

The global bilinear form $a(\cdot,\cdot)$ is coercive at the discrete
level. In order to show coercivity we first introduce a semi-norm on
each subdomain for all $(v_i,\varphi) \in V_{h,i} \times \Lambda_h$,
\begin{equation}
    \Bnorm{(v_i,\varphi)}{i}^{2} := 
  \eta \Lnorm{ v_i }_{\Ti{i}}^{2} +
  \Lnorm{ \nabla v_i }_{\Ti{i}}^{2} + 
  \mu \Lnorm{ \jump{v_i} }_{\EPS_i \setminus  \Gamma_i}^{2}
  + \mu \Lnorm{ v_i - \varphi }_{\Gamma_i}^{2},
\end{equation}
for $i=1,..,N_s$. Note that if a subdomain is {\it floating}, that is
it does not touch the Dirichlet boundary condition, and $\eta = 0$,
then $\Bnorm{(v_i,\varphi)}{i} = 0$ implies $v_i$ and $\varphi$ are
constants and not necessarily zero. The energy norm over the whole
domain is defined by
\begin{equation}
  \Bnorm{(v,\varphi)}{}^2 := \sum_{i=1}^{N_s} \Bnorm{(v_i,\varphi)}{i}^{2}.
\end{equation}
In order to verify that this is actually a norm, we just need to check
its kernel: if $\Bnorm{(v,\varphi)}{} = 0$ then $v$ and $\varphi$ are
both constants and $ v |_{\Gamma} = \varphi $. Moreover there are
subdomains that touch the Dirichlet boundary condition and therefore
$v |_{\partial \OM} = 0$. Hence $(v,\varphi) = 0$.

The proof of coercivity for IPH is done subdomain by subdomain: we
first collect the contribution of each subdomain
\begin{equation} \label{eq:coercAtilde}
  \begin{array}{rcl}
    {a}( (v,\varphi), (v,\varphi) ) &=&
    {a}_\GAM(\varphi,\varphi) + \sum_{i=1}^{N_s}
    \big( {a}_i(v_i,v_i) + 2 {a}_{i\GAM}(v_i,\varphi) \big),
    \\ 
    &=& \sum_{i=1}^{N_s} \big( {a}_i(v_i,v_i) + 2 {a}_{i\GAM}(v_i,\varphi)
    + \mu \norm{\varphi}_{\Gamma_i}^2 \big).
  \end{array}
\end{equation}
Each right-hand side can be bounded from below by semi-norms
$\Bnorm{(v_i,\varphi)}{i}$ for $i=1,..,N_s$; for details see
\cite{hajian2014analysis,lehrenfeld2010hybrid}. Then we obtain
\begin{equation*}
  {a}( (v,\varphi), (v,\varphi) ) \geq c \sum_{i=1}^{N_s}
  \Bnorm{(v_i,\varphi)}{i}^2
  = c \Bnorm{(v,\varphi)}{}^2, \quad
  \forall (v,\varphi) \in V_h \times \Lambda_h,
\end{equation*}
where $0<c<1$ and $c$ does not depend on $h$ and $k$.

An IPH approximation of the exact solution is obtained by solving the
following problem: find $(u_h,\lambda_h) \in V_h \times \Lambda_h$
such that
\begin{equation} \label{eq:discpb}
  a( (u_h, \lambda_h), (v,\varphi) ) = \int_\OM f \, v, \quad \forall
  (v,\varphi) \in V_h \times \Lambda_h,
\end{equation}
which has a unique solution since $a(\cdot,\cdot)$ is coercive on $V_h
\times \Lambda_h$. We can also show that IPH has optimal approximation
properties, i.e., if the weak solution $u$ is regular enough then 
\begin{equation*}
    \norm{u_h - u}_{0} \leq c\, h^{k+1} | u |_{k+1,\OM},
\end{equation*}
see details in \cite{dgunified, lehrenfeld2010hybrid}. 

We now describe how subdomains in the discrete problem (\ref{eq:discpb})
communicate. If we test (\ref{eq:discpb}) with $\varphi=0$ and $v=0$
in all subdomains except $\OM_i$ we obtain
\begin{equation} \label{eq:localsolve}
  a_i(u_i,v_i) + a_{i\Gamma}(v_i,\lambda_h) = \int_{\OM_i} f \, v_i,
  \quad \forall v_i \in V_{h,i}, \text{ for } i = 1, \hdots, N_s,
\end{equation}
where $u_i := u_h |_{\OM_i}$. This shows that $u_i$ is determined if
$\lambda_h$ is known. More precisely $\lambda_h$ is used as a
Dirichlet boundary data on $\partial \OM_i$ in a weak sense using a
Nitsche penalization technique. Now if we test (\ref{eq:discpb}) with
$v=0$ and $\varphi\not=0$, we obtain an equation for
$\lambda_h$:
\begin{equation} \label{eq:lambdaeq}
  a_\Gamma(\lambda_h,\varphi) + \sum_{i=1}^{N_s} a_{i\Gamma}(u_i,
  \varphi) = 0, \quad \forall \varphi \in \Lambda_h.
\end{equation}
If we further let $\varphi$ be non-zero only on $\Gamma_{ij}$, a
segment shared by $\OM_i$ and $\OM_j$, then (\ref{eq:lambdaeq}) reads
\begin{equation} \label{eq:continuity}
  \lambda_h = \frac{1}{2\mu} \Big(\mu u_i - \PDif{u_i}{\NOR_i} \Big) + 
  \frac{1}{2\mu} \Big(\mu u_j - \PDif{u_j}{\NOR_j} \Big),
  \quad \text{on} \, \GAM_{ij}.
\end{equation}  
In the language of HDG methods, equation (\ref{eq:continuity}) is called
continuity condition. The continuity condition (\ref{eq:continuity})
is the core of the optimized Schwarz method that we will describe in
Section \ref{sec:optIPH}. We have shown in \cite{hajian2014analysis}
for the case of two subdomains how to exploit (\ref{eq:continuity}) to
design a fast solver which we extend to many subdomains in this paper.

\subsection{Schur complement and matrix formulation}
The discrete problem (\ref{eq:discpb}) can be written in an equivalent
matrix form. We first choose nodal basis functions for $\PO^k(K)$ and
denote the space of degrees of freedoms (DOFs) of $V_h$ by $V$ and
similarly for subspaces, denoted by $\{V_i\}$. Then the discrete
problem (\ref{eq:discpb}) is equivalent to
\begin{equation}   \label{eq:linsyspart}
  \underbrace{
    \MATT{A_{I}}{A_{I\Gamma}}{A_{I\Gamma}^\top}{A_{\GAM}}}_{A:=}
  \Arr{\uB}{\lB} = \Arr{\fB}{0},
\end{equation}
where $\uB$ and $\lB$ are DOFs corresponding to $u_h$ and $\lambda_h$,
respectively. Here $A_I$ corresponds to the bilinear form
$\sum_{i=1}^{N_s} a_i(\cdot,\cdot)$, $A_{I\Gamma}$ corresponds to
$\sum_{i=1}^{N_s} a_{i\Gamma}(\cdot,\cdot)$ and
$A_\Gamma$ corresponds to $ a_\Gamma(\cdot,\cdot)$.

Since the bilinear form (\ref{eq:IPHbilin}) is symmetric and positive
definite we can conclude that $A$ is s.p.d. Hence its diagonal blocks,
$A_I$ and $A_\Gamma$, are also s.p.d. If we eliminate the interface
unknown, $\lB$, we arrive at a linear system in terms of {\it primal}
variables $\uB$ only. This coincides with the {\it primal} formulation
of IPH, see \cite{hajian2014analysis,lehrenfeld2010hybrid} for
details. On the other hand we can eliminate $\uB$ and obtain a Schur
complement formulation
\begin{equation} \label{eq:schureq}
  S_\Gamma \lB = \BO{g},
\end{equation}
where 
\begin{equation}
  S_\Gamma := A_\Gamma^{} - A_{I\Gamma}^\top A_I^{-1} A_{I \Gamma}^{}, 
  \quad 
  \BO{g} := - A_{I\Gamma}^\top A_I^{-1} \fB.
\end{equation}

The Schur complement matrix has smaller dimension compared to
(\ref{eq:linsyspart}) and is also s.p.d. Therefore one approach in
solving (\ref{eq:schureq}) is to use the conjugate gradient (CG)
method. However the convergence of CG is affected by the condition
number of $S_\Gamma$, which is similar to the condition number of
classical FEM Schur complement systems:
\begin{proposition} \label{prop:cond}
  Let $S_\Gamma$ be the Schur complement of the IPH discretization.
  Then for all $\varphi \in \Lambda_h$ we have
  \begin{equation}
      c \frac{H}{H_\OM^2} \norm{\varphi}_\GAM^2 \leq \phiB^\top {S}_\GAM \phiB 
      \leq C \alpha \frac{k^2}{h} \norm{\varphi}_\GAM^2,
  \end{equation}
  and therefore the condition number $\kappa( S_\Gamma )$ is bounded
  by
  \begin{equation}
    \kappa( S_\Gamma ) \leq C \, \alpha \, \frac{H_\OM^2 k^2}{H h}
    \kappa( M_\Gamma ),
  \end{equation}
  where $M_\Gamma$ is the mass matrix along the interface. Moreover
  all constants are independent of $\alpha$, $k$, $h$ and $H$.
\end{proposition}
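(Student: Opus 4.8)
The plan is to exploit the energy characterisation of the Schur complement. Since $A$ in (\ref{eq:linsyspart}) is symmetric positive definite and $S_\GAM$ is obtained by eliminating the interior block $A_I$, for every $\varphi\in\Lambda_h$ with coefficient vector $\phiB$ one has
\[
  \phiB^\top S_\GAM\phiB
  = \min_{\uB}\Arr{\uB}{\phiB}^\top A\Arr{\uB}{\phiB}
  = \min_{v\in V_h} a\big((v,\varphi),(v,\varphi)\big),
\]
the minimiser being the discrete harmonic extension $\uB=-A_I^{-1}A_{I\GAM}\phiB$. The upper bound is then immediate: evaluating the minimisation at $v=0$ and using (\ref{eq:coercAtilde}) together with (\ref{eq:deftildea}) gives $\phiB^\top S_\GAM\phiB\le a((0,\varphi),(0,\varphi))=a_\GAM(\varphi,\varphi)=\mu\sum_{i}\norm{\varphi}_{\Gamma_i}^2=2\mu\norm{\varphi}_\GAM^2$, and since $\mu=\alpha k^2/h$ this is precisely $C\alpha\frac{k^2}{h}\norm{\varphi}_\GAM^2$.

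For the lower bound I would first invoke the coercivity estimate of Section \ref{sec:iph}, namely $a((v,\varphi),(v,\varphi))\ge c\,\Bnorm{(v,\varphi)}{}^2$ with $c$ independent of $h,k,H$. Since the resulting lower bound must in particular hold at the energy-minimising extension, it suffices to prove the trace-type inequality
\[
  \norm{\varphi}_\GAM^2\le C\,\frac{H_\OM^2}{H}\,\Bnorm{(v,\varphi)}{}^2
  \qquad\text{for all } v\in V_h .
\]
Writing $\varphi=(\varphi-v_i)+v_i$ on each $\Gamma_i$ and using the triangle inequality splits the left-hand side into a penalty part and a trace part. The penalty part $\sum_i\norm{\varphi-v_i}_{\Gamma_i}^2=\mu^{-1}\sum_i\mu\norm{\varphi-v_i}_{\Gamma_i}^2\le\mu^{-1}\Bnorm{(v,\varphi)}{}^2=\frac{h}{\alpha k^2}\Bnorm{(v,\varphi)}{}^2$ is already below the target because $\frac{h}{k^2}\le h\le H\le\frac{H_\OM^2}{H}$.

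The hard part will be the trace part $\sum_i\norm{v_i}_{\Gamma_i}^2$, for which I would combine two ingredients. First, a discontinuous-Galerkin trace inequality on each subdomain of diameter $H$,
\[
  \norm{v_i}_{\partial\OM_i}^2\le
  C\Big(\tfrac{1}{H}\Lnorm{v_i}_{\OM_i}^2+H\,\semin{v_i}_{h}^2\Big),
  \qquad
  \semin{v_i}_{h}^2:=\Lnorm{\nabla v_i}_{\Ti{i}}^2+\sum_{e}h_e^{-1}\norm{\jump{v_i}}_e^2 ,
\]
where the broken seminorm $\semin{v_i}_h$ is controlled by $\Bnorm{(v,\varphi)}{}^2$ since $\mu=\alpha k^2/h\ge\alpha/h$. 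Second, a broken Poincar\'e--Friedrichs inequality over the whole domain, $\Lnorm{v}_{\OM}^2\le CH_\OM^2\,\Bnorm{(v,\varphi)}{}^2$, where the global scale $H_\OM$ enters; this uses that the weak Dirichlet penalty $\mu\norm{v_i}^2$ on $\partial\OM$ (present in $\Bnorm{(v,\varphi)}{}$ through the boundary edges of $\EPS_i\setminus\Gamma_i$) together with the interface coupling through $\varphi$ controls all broken-gradient and jump terms, including the cross-subdomain jumps via $\norm{v_i-v_j}_{\Gamma_{ij}}\le\norm{v_i-\varphi}_{\Gamma_{ij}}+\norm{\varphi-v_j}_{\Gamma_{ij}}$. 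Substituting both and using $H\le H_\OM^2/H$ yields $\sum_i\norm{v_i}_{\Gamma_i}^2\le C\frac{H_\OM^2}{H}\Bnorm{(v,\varphi)}{}^2$, which completes the lower bound. The genuine obstacle is securing the sharp powers of $H$ and $H_\OM$ in these two inequalities for discontinuous functions; the cleanest route is a scaling argument on a reference subdomain combined with an Oswald-type conforming interpolant that absorbs the jump terms into the continuous seminorm before the classical trace and Friedrichs inequalities are applied.

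Finally, the condition-number estimate follows by converting the two $\Lsp^2(\GAM)$ bounds into Euclidean ones. Writing $\norm{\varphi}_\GAM^2=\phiB^\top M_\GAM\phiB$, the lower and upper bounds give $\lambda_{\min}(S_\GAM)\ge c\frac{H}{H_\OM^2}\lambda_{\min}(M_\GAM)$ and $\lambda_{\max}(S_\GAM)\le C\alpha\frac{k^2}{h}\lambda_{\max}(M_\GAM)$, whence $\kappa(S_\GAM)=\lambda_{\max}(S_\GAM)/\lambda_{\min}(S_\GAM)\le C\alpha\frac{H_\OM^2 k^2}{Hh}\,\kappa(M_\GAM)$, with all constants independent of $\alpha,k,h,H$.
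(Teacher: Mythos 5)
Your proposal is correct in substance, but note that the paper itself gives no internal proof of Proposition \ref{prop:cond} --- it simply cites Appendix 3 of the thesis \cite{hajianthesis} --- so the comparison can only be in spirit. Your route is the natural one and it checks out: the variational characterization $\phiB^\top S_\GAM \phiB = \min_{v} a((v,\varphi),(v,\varphi))$ is valid because $A_I$ is s.p.d.; testing with $v=0$ gives $a_\GAM(\varphi,\varphi)=\mu\sum_i\norm{\varphi}_{\Gamma_i}^2 = 2\mu\norm{\varphi}_\GAM^2$ (each interface edge is counted once per adjacent subdomain, hence the factor $2$), which is the stated upper bound; and the lower bound follows from coercivity plus your trace-type inequality. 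Crucially, the two ``hard'' ingredients you flag are not gaps: the subdomain trace inequality with the $H^{-1}\Lnorm{v_i}^2_{\OM_i}+H\semin{v_i}_h^2$ scaling is exactly the Feng--Karakashian inequality that the paper itself quotes in its appendix as (\ref{eq:karakashian}) (and in the weaker form of Lemma \ref{lemma:HDGlowerforB}), and the broken Poincar\'e--Friedrichs inequality with the $H_\OM^2$ scaling is Brenner's result \cite{brenner-poincare}, also cited there; your handling of the interface jumps via $\norm{v_i-v_j}_{\Gamma_{ij}}\le\norm{v_i-\varphi}_{\Gamma_{ij}}+\norm{\varphi-v_j}_{\Gamma_{ij}}$ and of the weak Dirichlet data through the boundary part of $\EPS_i\setminus\Gamma_i$ is exactly what is needed to apply it, and the $h_e^{-1}$ jump weights are absorbed since $h^{-1}\le \mu/\alpha$. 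Two cosmetic points remain: the per-subdomain trace inequality holds with the individual diameter $H_i$, so replacing $H_i$ by the global $H$ tacitly assumes quasi-uniformly sized subdomains (the convention the paper uses throughout), and in the final eigenvalue step one should say explicitly that $\norm{\varphi}_\GAM^2=\phiB^\top M_\GAM\phiB$ turns both bounds into Rayleigh-quotient bounds relative to $M_\GAM$, which you do. So your argument is essentially complete modulo citing those two known lemmas, and it is very likely the same mechanism as the thesis proof the paper points to.
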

\begin{proof}
See \cite[Appendix 3]{hajianthesis}.
\end{proof}

\section{Optimized Schwarz method for IPH} \label{sec:optIPH} 
In this section we define and analyze an optimized Schwarz method (OSM) for
IPH discretizations. Since an IPH discretization is s.p.d.~we can use
an additive Schwarz preconditioner in conjunction with CG. However it
was first observed in \cite{hajian2013block} that the convergence
mechanism of the additive Schwarz method for IPH is different from
classical FEM. For a FEM discretization, the overlap between
subdomains makes the additive Schwarz method converge, while for IPH
convergence is due to a Robin transmission condition in a
non-overlapping setting, and the Robin parameter is exactly the
penalty parameter of IPH, $\mu = \alpha \, k^2 / h$.

Robin transmission conditions are the core of OSM to obtain faster
convergence compared to the additive Schwarz method. It was shown in
\cite{ganderos} that OSM's best performance is achieved if the Robin
parameter is scaled like $1/\sqrt{h}$. This however poses a
contradiction with the IPH discretization penalty parameter since the
scaling of $\mu$ cannot be weakened otherwise coercivity and optimal
approximation properties are lost. In \cite{hajian2014analysis}, the
authors modified and analyzed an OSM while not changing the scaling of
$\mu$. They showed that for the two subdomain case the OSM's
contraction factor is $\rho \leq 1 - O(\sqrt{h})$. This is a superior
convergence factor compared to additive Schwarz with $\rho \leq 1 -
O(h)$. If OSM is used as preconditioner for a Krylov subspace method,
then a contraction factor of $\rho \leq 1 - O(h^{1/4})$ is observed.

We will now define a many subdomain OSM for IPH with this property and
then analyze the solver and optimize the performance with respect to
the mesh parameter $h$ and the polynomial degree $k$.
\subsection{Definition of OSM}
We now construct an OSM for the IPH discretization. Observe that from
(\ref{eq:localsolve}), we can conclude that $u_i \in V_{h,i}$ is
determined provided $\lambda_h$ is known.
Recall also from (\ref{eq:continuity}) that two subdomains, say
$\OM_i$ and $\OM_j$ for $i\not =j$, are communicating using
\begin{equation*}
  \lambda_h = \frac{1}{2\mu} \Big(\mu u_i - \PDif{u_i}{\NOR_i} \Big) + 
  \frac{1}{2\mu} \Big(\mu u_j - \PDif{u_j}{\NOR_j} \Big),
  \quad \text{on} \, \GAM_{ij}.
\end{equation*}
Let us now assume that $\lambda_h$ is double-valued across the
interface $\Gamma$. Then we can assign an interface unknown to each
subdomain which we call $\lambda_{i}$ for $i=1,\hdots,N_s$, as
illustrated in Figure \ref{fig:lambda}.
\begin{figure}
  \centering
  \epsfig{file=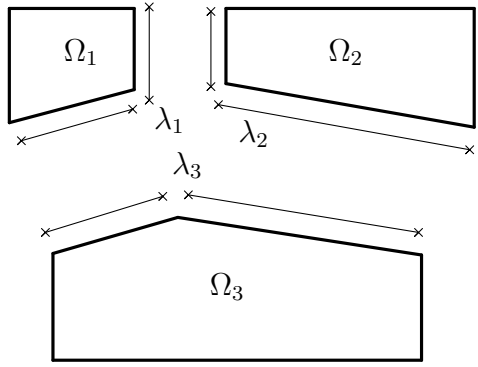, scale=1.0}
  \caption{A many subdomain configuration with unknown duplication
    along interfaces.}
  \label{fig:lambda}
\end{figure}
Therefore on each interface between two subdomains, say $\Gamma_{ij}$,
we should introduce two conditions. We do so by splitting the
continuity condition in the following fashion:
\begin{equation} \label{eq:augcont}
  \begin{array}{rcr c l}
    \gamma \lambda_{i} &+& (1-\gamma) \lambda_{j} &=&
    \frac{1}{2\mu} \Big(\mu u_i - \PDif{u_i}{\NOR_i} \Big) + 
    \frac{1}{2\mu} \Big(\mu u_j - \PDif{u_j}{\NOR_j} \Big),
    \\
    (1-\gamma) \lambda_{i} &+& \gamma \lambda_{j} &=&
    \frac{1}{2\mu} \Big(\mu u_i - \PDif{u_i}{\NOR_i} \Big) + 
    \frac{1}{2\mu} \Big(\mu u_j - \PDif{u_j}{\NOR_j} \Big),
  \end{array}
\end{equation}
where $\gamma \in \RE^+$ is a ``suitable'' parameter that we will use
to optimize the iterative method. Observe that if we subtract the two
conditions in (\ref{eq:augcont}), we arrive at $(1-2\gamma)
(\lambda_{i} - \lambda_{j}) = 0$. If $\gamma \not = \frac{1}{2}$ then
we have $\lambda_{i} = \lambda_{j} = \lambda_h$ on all $\Gamma_{ij}$,
i.e., we recover the single-valued $\lambda_h$ and therefore the
solution to the {\it augmented} system coincides with the original IPH
approximation.

The conditions in (\ref{eq:augcont}) can be written in an equivalent
variational form by multiplying them with appropriate test functions
with support on $\partial \OM_i \setminus \partial \OM$. The advantage
is that we can then use the original blocks of the IPH linear
system. For a subdomain, e.g., $\OM_i$, we obtain
\begin{equation}
  \gamma \, a_\Gamma^{(i)}(\lambda_{i}, \varphi_i) +
  a_{i\Gamma}(u_i,\varphi_i) 
  + \sum_{j \in N(i)} 2 \mu (1-\gamma) \int_{\Gamma_{ij}} \lambda_j \varphi_i 
  + \int_{\Gamma_{ij}} \Big( \PDif{u_j}{\NOR_j} - \mu u_j \Big)
  \varphi_i = 0.
\end{equation}
In order to clarify the definition of the new linear system we provide an
example in the case of two subdomains.

\begin{example}[two subdomain case] \label{example}
Suppose we have two subdomains and we call the interface between them
$\Gamma$. Then an IPH discretization with this configuration looks
like
\begin{equation} \label{eq:exampleIPH}
  \MATnine{A_1}{}{A_{1\GAM}}{}{A_2}{A_{2\GAM}}{A_{1 \GAM}^\top}{A_{2\GAM}^\top}{A_{\GAM}}
  \Arrtri{\uB_1}{\uB_2}{\lB}
  = \Arrtri{\fB_1}{\fB_2}{0}.
\end{equation}
Observe that continuity between the two subdomains is imposed through
the last row of (\ref{eq:exampleIPH}). We now suppose $\lB$ is
double-valued across the interface, $\lB_1, \lB_2$. Then we introduce two
conditions for the two interface unknowns, $\lB_1$ and $\lB_2$,
\begin{equation*}
  \begin{array}{rcrcccccc}
  \gamma A_\GAM \lB_1 &+& (1-\gamma) A_\GAM \lB_2 &+& 
  A_{1\GAM}^\top \uB_1^{} &+& A_{2\GAM}^\top \uB_2^{} &=& 0,
  \\
  (1-\gamma) A_\GAM \lB_1 &+& \gamma A_\GAM \lB_2 &+& 
  A_{1\GAM}^\top \uB_1^{} &+& A_{2\GAM}^\top \uB_2^{} &=& 0,
  \end{array}
\end{equation*}
where $\gamma \not = \frac{1}{2}$.  If we regroup the unknowns
according to the subdomain enumeration, then the ``augmented'' linear
system looks like
\begin{equation}
\label{eq:exampleAugmented}
  \left[
  \begin{array}{cc|cc}
    A_1 & A_{1\GAM} & & \\
    A_{1 \GAM}^\top & \gamma A_\GAM & A_{2 \GAM}^\top & (1-\gamma) A_\GAM \\
    \hline
    & & A_2 & A_{2\GAM}  \\
    A_{1 \GAM}^\top & (1-\gamma) A_\GAM  &  A_{2 \GAM}^\top & \gamma A_\GAM 
  \end{array}
  \right]
  \left(
  \begin{array}{c}
    \vB_1 \\
    \lB_1 \\
    \vB_2 \\
    \lB_2
  \end{array}
  \right)
  =
  \left(
  \begin{array}{c}
    \fB_1 \\
    0 \\
    \fB_2 \\
    0
  \end{array}
  \right).
\end{equation}
Note that provided $\gamma \not = \frac{1}{2}$, the linear systems
(\ref{eq:exampleIPH}) and (\ref{eq:exampleAugmented}) are equivalent
in the sense that $\vB_1=\uB_1$ and $\uB_2 = \vB_2$ and $\lB_1 = \lB_2
= \lB$.

The authors showed in \cite{hajian2014analysis} that for the
convergence analysis of a block Jacobi method applied to
(\ref{eq:exampleAugmented}) we need to obtain sharp bounds on the
eigenvalues of $A_\Gamma^{-1}B_i^{}$ where
\begin{equation} \label{eq:Boperator} \normalfont
  B_i^{} := A_{i\Gamma}^\top A_{i}^{-1} A_{i\Gamma}^{}, \quad i=1,2.
\end{equation}
Such bounds were obtained in \cite[Lemma 3.7]{hajian2014analysis}. We
will further improve the eigenvalue bounds and also obtain sharp estimates
with respect to the time step $\tau$, $\eta = \tau^{-1}$.
\end{example}

We are now in the position to define the OSM for an IPH
discretization. Formally we first construct the augmented system with
double-valued interface unknowns along the interfaces, i.e.,
(\ref{eq:exampleAugmented}). Then we rearrange the unknowns subdomain
by subdomain, i.e., collect $\{ (\uB_i, \lB_i ) \}_{i=1}^{N_s}$ and
finally we perform a block Jacobi method on the augmented linear
system with a suitable optimization parameter $\gamma$.
\begin{algorithm} \label{algo:multi}
Let $\big\{(u_i^{(0)},\lambda_{i}^{(0)})\big\}_{i=1}^{N_s}$ be a set
of initial guesses for all subdomains. Then for $n=1,2,\hdots$ find
$\big\{(u_i^{(n)},\lambda_{i}^{(n)})\big\}_{i=1}^{N_s}$ such that
\begin{equation} \label{eq:multi-loc}
    {a}_i(u_i^{(n)},v_i) +
    {a}_{i\GAM}(v_i,\lambda_{i}^{(n)} ) =
    \int_{\OM_i} f \, v_i, \quad \forall v_i \in V_{h,i},
\end{equation}
and the continuity condition on $\GAM_{ij}$ reads
\begin{equation} \label{eq:multi-cont}
  \gamma \lambda_{i}^{(n)} -
  \frac{1}{2\mu} \Big(\mu u_i - \PDif{u_i}{\NOR_i} \Big)^{(n)}
  =
  - (1- \gamma ) \lambda_{j}^{(n-1)} 
  + \frac{1}{2\mu} \Big(\mu u_j - \PDif{u_j}{\NOR_j} \Big)^{(n-1)}.
\end{equation}
\end{algorithm}
Since the solution of the augmented system coincides with the original
IPH linear system, we can conclude that Algorithm \ref{algo:multi} has
the same fixed point as the solution of the IPH discretization. 

We call $u_i^{(n)}$ satisfying (\ref{eq:multi-loc}) with $f=0$ a {\it
  discrete harmonic extension} of $\lambda_{i}$ in $\OM_i$. This
definition helps us in analyzing OSM.

\begin{definition}[Discrete harmonic extension]\label{HarmExtDef}
For all $\varphi_i \in \Lambda_i$, we denote by $\Hopt_i(\varphi_i)
\in V_{h,i}$ the discrete harmonic extension into $\OM_i$,
\begin{equation}
  \Hopt_i^{}(\varphi) \equiv - A_{i}^{-1} A_{i\GAM}^{} \phiB_i^{},
\end{equation}
where $A_i$ and $A_{i\Gamma}$ correspond to the bilinear forms
$a_i(\cdot,\cdot)$ and $a_{i\Gamma}(\cdot,\cdot)$.  The corresponding
$\varphi_i$ is called {\normalfont generator}.  In other words $u_{i}
:= \Hopt_i(\varphi_i)$ is an approximation obtained from the IPH
discretization in $\OM_i$ using $\varphi_i$ as Dirichlet data, i.e., $
A_{i} \uB_i + A_{i\GAM} \phiB_i = 0 $.
\end{definition}

There are some questions to be addressed concerning Algorithm
\ref{algo:multi}, e.g., 
\begin{enumerate}
  \item Is Algorithm \ref{algo:multi} well-posed?
  \item Does Algorithm \ref{algo:multi} converge? If yes, then can we
    obtain a contraction factor?
  \item How to use the optimization parameter $\gamma$ to improve
    the contraction factor?
  \item How do different choice of $\eta$ affect the algorithm and
its scalability?
\end{enumerate}
We will answer these questions now in Section \ref{sec:analysis}. 

\subsection{Analysis of OSM}
\label{sec:analysis}

The main goal of this section is to analyze Algorithm \ref{algo:multi}
and answer the questions regarding its well-posedness and
convergence. Our analysis is inspired by a similar result for FEM in
\cite{qin0,qin,qin1}, and we refer the reader to the original work of Lions
in \cite{lions} for an analysis at the continuous level. Our analysis is
however substantially different since DG methods impose continuity
across elements weakly. We will first prove
\begin{theorem}[Convergence estimate] \label{thm:main}
  Let the optimization parameter satisfy $\frac{1}{2} < \gamma \leq
  1$. Then Algorithm \ref{algo:multi} is well-posed and
  converges. More precisely the following contraction estimate holds
  \begin{equation*}
    \norm{ \Ropt(\varphi^{(n)}) }^2 \leq \rho \, \norm{
      \Ropt(\varphi^{(n-1)}) }^2,
  \end{equation*}
 where $\norm{ \Ropt(\varphi) }^2 := \sum_{i=1}^{N_s} \norm{
   \Ropt_i(\varphi_i) }_{\Gamma_i}^2$ and $\Ropt_i(\varphi_i) :=
 \gamma \varphi_i - \frac{1}{2\mu} \Big(\mu - \PDif{}{\NOR_i} \Big)
 \Hopt_i^{} (\varphi_i)$. Here the contraction factor, i.e., $\rho$,
 is
  \begin{equation} \label{eq:thm}
    \rho = 1 -
     \min \frac{ (2\gamma - 1) }
  { \mu (2\gamma - 1)^2 \, C(H,\eta) +1 },
  \end{equation}
where $\mu = \alpha {k^2}/{h}$ is the penalization parameter and 
\begin{equation*} \normalfont
  C(H,\eta) :=
  \left\{
  \begin{array}{ll}
    H & \text{in the case of no floating subdomains},
    \\
    \frac{1}{H \eta} & 
    \text{in the case of floating subdomains}.
  \end{array}
  \right.
\end{equation*}
\end{theorem}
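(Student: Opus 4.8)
The plan is to pass to the error equation, extract a one–step recursion for the Robin traces $\Ropt_i$, and then run an energy estimate in the spirit of Lions. Setting $f=0$ and writing $\varepsilon_i^{(n)}$ for the error of $\lambda_{i}^{(n)}$ (the exact solution carries the single–valued $\lambda_h$ on every $\Gamma_{ij}$), the local solve (\ref{eq:multi-loc}) shows that the error of $u_i^{(n)}$ is exactly the discrete harmonic extension $\Hopt_i(\varepsilon_i^{(n)})$. Introducing $\mathcal{F}_i(\varphi_i):=\frac{1}{2\mu}\big(\mu-\PDif{}{\NOR_i}\big)\Hopt_i(\varphi_i)$, so that $\Ropt_i(\varphi_i)=\gamma\varphi_i-\mathcal{F}_i(\varphi_i)$, the continuity condition (\ref{eq:multi-cont}) rearranges, on each $\Gamma_{ij}$, into
\[ \Ropt_i(\varepsilon_i^{(n)}) = (2\gamma-1)\,\varepsilon_j^{(n-1)} - \Ropt_j(\varepsilon_j^{(n-1)}), \]
after using $\mathcal{F}_j(\varepsilon_j)=\gamma\varepsilon_j-\Ropt_j(\varepsilon_j)$ to remove the incoming flux. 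This is the driving recursion. For well-posedness I would first observe that the variational form of one local step is governed by the symmetric bilinear form $a_i(e_i,v_i)+a_{i\GAM}(v_i,\varepsilon_i)+a_{i\GAM}(e_i,\psi_i)+2\mu\gamma\dotS{\varepsilon_i}{\psi_i}_{\Gamma_i}$; testing against $(e_i,\varepsilon_i)$ produces $a_i(e_i,e_i)+2a_{i\GAM}(e_i,\varepsilon_i)+2\mu\gamma\norm{\varepsilon_i}_{\Gamma_i}^2$, and since $2\gamma>1$ this dominates the coercivity combination bounded below in (\ref{eq:coercAtilde}) by $c\,\Bnorm{(e_i,\varepsilon_i)}{i}^2$. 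Hence for $\gamma>\tfrac12$ each local operator is s.p.d.\ and invertible, answering question~(1).

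The heart of the argument is the energy step. I would square the recursion in $\Lsp^2(\Gamma_{ij})$, sum over all ordered neighbour pairs, and use that each interface is counted once from $i$ and once from $j$, so that $\sum_i\sum_{j\in N(i)} F_j|_{\Gamma_{ij}}=\sum_j F_j|_{\Gamma_j}$ for any subdomain quantity $F_j$. Two identities organise the bookkeeping: $\dotS{\varepsilon_j}{\Ropt_j(\varepsilon_j)}_{\Gamma_j}=\gamma\norm{\varepsilon_j}_{\Gamma_j}^2-\dotS{\varepsilon_j}{\mathcal{F}_j(\varepsilon_j)}_{\Gamma_j}$, and the symmetry/positivity $\dotS{\varepsilon_j}{\mathcal{F}_j(\varepsilon_j)}_{\Gamma_j}=\frac{1}{2\mu}a_j(\Hopt_j\varepsilon_j,\Hopt_j\varepsilon_j)\ge 0$, the latter coming from the defining relation of $\Hopt_j$ and the symmetry of $a_j$. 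After the quadratic and cross terms collapse I expect to reach
\[ \norm{\Ropt(\varepsilon^{(n)})}^2 = \norm{\Ropt(\varepsilon^{(n-1)})}^2 - (2\gamma-1)\sum_{j=1}^{N_s}\Big(\norm{\varepsilon_j^{(n-1)}}_{\Gamma_j}^2 - \frac{1}{\mu}\,a_j(\Hopt_j\varepsilon_j^{(n-1)},\Hopt_j\varepsilon_j^{(n-1)})\Big). \]
Monotone decrease, hence convergence, then reduces to nonnegativity of each bracket. I would obtain this from the minimisation characterisation of the discrete harmonic extension: $\Hopt_j\varphi$ minimises $a_j(v,v)+2a_{j\GAM}(v,\varphi)$ with minimum value $-a_j(\Hopt_j\varphi,\Hopt_j\varphi)$, which together with the coercivity lower bound $a_j(v,v)+2a_{j\GAM}(v,\varphi)+\mu\norm{\varphi}_{\Gamma_j}^2\ge c\,\Bnorm{(v,\varphi)}{j}^2\ge0$ yields $\mu\big(\norm{\varphi}_{\Gamma_j}^2-\frac1\mu a_j(\Hopt_j\varphi,\Hopt_j\varphi)\big)\ge c\,\Bnorm{(\Hopt_j\varphi,\varphi)}{j}^2\ge0$. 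Thus each bracket is nonnegative and, crucially, controlled from below by the subdomain energy seminorm.

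To upgrade convergence to the stated contraction I would diagonalise the symmetric operator $\mathcal{F}_j$ in $\Lsp^2(\Gamma_j)$; on an eigenmode with eigenvalue $\sigma$ the bracket equals $1-2\sigma$ while $\norm{\Ropt_j\varphi}_{\Gamma_j}^2=(\gamma-\sigma)^2$. The previous step gives $\sigma\le\tfrac12$, so it remains to bound $\sup_\sigma (\gamma-\sigma)^2/(1-2\sigma)$ over the spectrum. The function decreases to a minimum at $\sigma=1-\gamma$ and then increases, so its maximiser sits at the largest eigenvalue $\sigma_{\max}$, where $\gamma-\sigma_{\max}\approx\gamma-\tfrac12=\tfrac{2\gamma-1}{2}$; at the low end it is at most $\gamma^2\le1$ (this is where $\gamma\le1$ enters, producing the additive $1$). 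Consequently a lower bound on $1-2\sigma_{\max}$ — equivalently a sharp lower bound $\Bnorm{(\Hopt_j\varphi,\varphi)}{j}^2\gtrsim \frac{1}{C(H,\eta)}\norm{\varphi}_{\Gamma_j}^2$ on the minimal discrete–harmonic energy — converts the bracket inequality into $\norm{\Ropt_j\varphi}_{\Gamma_j}^2\le\big(\mu(2\gamma-1)^2 C(H,\eta)+1\big)\big(\norm{\varphi}_{\Gamma_j}^2-\frac1\mu a_j(\Hopt_j\varphi,\Hopt_j\varphi)\big)$. Summing over $j$ and inserting into the energy identity gives exactly $\rho=1-\frac{2\gamma-1}{\mu(2\gamma-1)^2 C(H,\eta)+1}$, and settles questions~(2)--(4). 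The main obstacle is precisely this last, geometry-dependent estimate: it is a Poincaré/trace inequality on a subdomain of diameter $H$ whose constant genuinely separates the two regimes — a Dirichlet-anchored subdomain gives $C(H,\eta)=H$ through a Poincaré inequality, whereas a floating subdomain retains a constant mode whose energy is controlled only by the zeroth-order term $\eta$, giving $C(H,\eta)=\frac{1}{H\eta}$. I would isolate this as a separate, sharp lemma, namely the refinement of \cite[Lemma~3.7]{hajian2014analysis} carried out in Section~\ref{sec:sharpest}.
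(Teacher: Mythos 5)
Your proposal is correct and its skeleton coincides with the paper's: the same error recursion $\Ropt_i(\varepsilon_i^{(n)}) = (2\gamma-1)\varepsilon_j^{(n-1)} - \Ropt_j(\varepsilon_j^{(n-1)})$, the same telescoping energy identity obtained by squaring and summing over interfaces, and the same use of coercivity plus the harmonic-extension identity $a_{j\Gamma}(u_j,\varphi_j)=-a_j(u_j,u_j)$ to make the correction term nonnegative. You deviate in two local arguments, both legitimately. First, for well-posedness the paper proves injectivity of $\Ropt_i$ (Lemma \ref{lem:injective}) via Cauchy--Schwarz, whereas you show the coupled local system for $(u_i,\lambda_i)$ is s.p.d.; your argument is sound (the extra $(2\gamma-1)\mu\norm{\lambda_i}_{\Gamma_i}^2$ kills the constant-mode kernel of the seminorm even for floating subdomains), and is arguably more directly tied to "we can actually iterate." Second, and more substantively, where the paper converts the energy decrease into a contraction by proving the comparison (\ref{eq:RoptEst}) directly --- splitting $\Ropt_i(\varphi_i)=(\gamma-\tfrac12)\varphi_i+\tfrac12(\varphi_i-u_i)+\tfrac{1}{2\mu}\PDif{u_i}{\NOR_i}$ and applying triangle, inverse and trace inequalities --- you diagonalize the self-adjoint operator $\mathcal{F}_j$ in $\Lsp^2(\Gamma_j)$ and bound $\sup_\sigma (\gamma-\sigma)^2/(1-2\sigma)$ over its spectrum. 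This is a clean reduction (self-adjointness holds since $\mathcal{F}_j$ is represented by $\tfrac{1}{2\mu}M_{\Gamma_j}^{-1}B_j$ with $B_j$ symmetric, a point worth stating), and it buys a transparent explanation of where the two terms $\mu(2\gamma-1)^2C(H,\eta)$ and $1$ in the denominator come from: the edge of the spectrum near $\tfrac12$ and the low end where $\gamma\le 1$ enters. Both routes bottom out in the identical geometric input: a lower bound on the discrete-harmonic energy $\Bnorm{(\Hopt_j\varphi,\varphi)}{j}^2\gtrsim C(H,\eta)^{-1}\norm{\varphi}_{\Gamma_j}^2$, which you correctly state and defer. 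One correction there: in the paper this input is \emph{not} the refinement of Lemma 3.7 of the two-subdomain paper carried out in Section \ref{sec:sharpest}; it is the pair of trace inequalities in Appendix \ref{sec:proofofestimate} --- Lemma \ref{lemma:HDGlowerforB} for Dirichlet-anchored subdomains (giving $H$) and the Feng--Karakashian inequality (\ref{eq:karakashian}) combined with the $\eta\norm{u_i}_{\OM_i}^2$ term for floating ones (giving $(H\eta)^{-1}$) --- so your "separate sharp lemma" should cite those, and with them your proof closes.
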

The choice $\gamma=1$ is a special case. It is shown in
\cite{hajian2014analysis, hajian2014} that in this case Algorithm
\ref{algo:multi} is equivalent to a non-overlapping additive Schwarz
method\footnote{Non-overlapping additive Schwarz method for DG methods
  means non-overlapping both at the algebraic level as well as
  continuous level in contrast to FEM.} applied to the {\it primal}
formulation of IPH. The theory for s.p.d.~preconditioners, i.e., the
abstract Schwarz framework, shows that the condition number of the
one-level additive Schwarz method for IPH is bounded by $k^2 h^{-1}
H^{-1}$. This is equivalent to a contraction factor $\rho \leq 1 -
O(\frac{h H}{k^2})$. \redd{More precisely, suppose $A$ is the original
  system matrix in primal form and $A_\text{add}$ is the corresponding
  additive Schwarz preconditioner (see for instance \cite[Section
    1.5]{widlund}) then the block Jacobi method converges with the
  aforementioned contraction factor in the $A$-norm.} It is easy to
see that our analysis also reveals the same contraction factor
\redd{(in {the} $\norm{\Ropt(\cdot)}$ norm)} in this
special case: {let} $\gamma = 1$ in (\ref{eq:thm}) and
recall that $\mu = \alpha {k^2}/{h}$. Then{,} we have
\begin{equation} \label{eq:IPHASM}
  \rho \leq 
  1 - O \Big( \frac{h H}{k^2} \Big),
\end{equation}
as $h$ and $H$ go to zero or $k$ goes to infinity. 

Our {second} objective of this section is to minimize the
contraction factor through a suitable choice of the optimization
parameter $\gamma$. This is stated in
\begin{corollary}[Optimized contraction factor] \label{cor:osm}
Let $\eta = \tau^{-1}$ where $\tau$ is the time-step which is chosen
to be $O(1)$ or $O(H)$ or $O(H^2)$. Then the optimized contraction
factor for Algorithm \ref{algo:multi} for the different choices of $\tau$
is
\begin{equation}
  \rho_\text{opt} \leq 
  \left\{
  \begin{array}{lll}
    1 - O( \frac{\sqrt{hH}}{k} ) & \text{for } \tau = O(1), & \text{if }
    \gamma_\text{opt} = \frac{1}{2} ( 1 + \frac{\sqrt{hH}}{k} ),
    \\ 
    1 - O( \frac{\sqrt{h}}{k} ) & \text{for } \tau = O(H), & \text{if }
    \gamma_\text{opt} = \frac{1}{2} ( 1 + \frac{\sqrt{h}}{k} ),
    \\
    1 - O( \sqrt{\frac{h}{H}} \frac{1}{k} ) & \text{for } \tau = O(H^2), & \text{if }
    \gamma_\text{opt} = \frac{1}{2} ( 1 + \sqrt{\frac{h}{H}} \frac{1}{k} ).
  \end{array}
  \right.
\end{equation}
\end{corollary}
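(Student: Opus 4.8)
The plan is to treat the contraction factor $\rho$ of Theorem~\ref{thm:main} as a function of the single scalar $\gamma$ and to minimize it by elementary calculus, the optimization decoupling completely from the subdomain geometry once $\mu$ and $C := C(H,\eta)$ are fixed. First I would introduce the change of variable $x := 2\gamma - 1$, which sends the admissible range $\half < \gamma \leq 1$ to $x \in (0,1]$, and rewrite the quantity to be made large as
\begin{equation*}
  f(x) := \frac{x}{\mu\, C\, x^2 + 1}, \qquad x \in (0,1],
\end{equation*}
so that $\rho = 1 - f(x)$ and minimizing $\rho$ over $\gamma$ is the same as maximizing $f$ over $x$.

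Next I would locate the maximizer of $f$ on $(0,1]$. Differentiating gives
\begin{equation*}
  f'(x) = \frac{1 - \mu\, C\, x^2}{(\mu\, C\, x^2 + 1)^2},
\end{equation*}
so the unique interior critical point is $x_\star = (\mu C)^{-1/2}$, and since $f'>0$ for $x<x_\star$ and $f'<0$ for $x>x_\star$ this is a maximum. Provided $x_\star \leq 1$, evaluation yields the clean value $f(x_\star) = \half (\mu C)^{-1/2}$, hence
\begin{equation*}
  \rho_\text{opt} = 1 - \frac{1}{2\sqrt{\mu C}}, \qquad
  \gamma_\text{opt} = \half\Big( 1 + \frac{1}{\sqrt{\mu C}} \Big).
\end{equation*}

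Finally I would specialize to the floating-subdomain branch $C = (H\eta)^{-1}$ and substitute $\mu = \alpha k^2/h$ together with the three time-step regimes $\eta = \tau^{-1}$. For $\tau = O(1)$, $\tau = O(H)$, $\tau = O(H^2)$ one obtains $C = O(H^{-1})$, $C = O(1)$, $C = O(H)$ respectively, whence $\mu C$ is $O(k^2/(hH))$, $O(k^2/h)$, $O(k^2 H/h)$; reading off $1/(2\sqrt{\mu C})$ and $(\mu C)^{-1/2}$ then produces exactly the three tabulated rates for $\rho_\text{opt}$ and the matching values of $\gamma_\text{opt}$, the constant $\alpha$ being absorbed into the $O(\cdot)$ notation.

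The only point requiring care, and what I expect to be the main (though mild) obstacle, is the admissibility condition $x_\star = (\mu C)^{-1/2} \leq 1$, i.e.\ $\mu C \geq 1$, which is what legitimizes using the interior critical point rather than the boundary value $x=1$. I would verify in each regime that $\mu C$ equals $O(k^2/(hH))$, $O(k^2/h)$ or $O(k^2 H/h)$ and is therefore $\geq 1$ for $h$ small enough (using $h \leq H$ and $k \geq 1$), so that the optimum is genuinely interior; were $\mu C < 1$ instead, the maximum of $f$ on $(0,1]$ would sit at the endpoint $x=1$, that is $\gamma = 1$, and one would recover the additive-Schwarz factor \eqref{eq:IPHASM} rather than an improved rate.
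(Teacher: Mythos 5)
Your proposal is correct in substance and arrives at exactly the stated parameters and rates, but by a genuinely different route from the paper's. The paper never differentiates in $\gamma$: it substitutes the ansatz $\gamma = \frac{1}{2}(1 + h^{\xi}H^{\zeta}/k^{\psi})$ into Theorem \ref{thm:main} and carries out a max--min over the exponents, keeping both branches of $C(H,\eta)$ inside the minimum,
\begin{equation*}
  \rho_{\text{opt}} \leq 1 - \max_{\xi,\zeta,\psi}\,\min\Big\{
  \frac{h^{\xi}H^{\zeta}}{k^{2-\psi}h^{2\xi-1}H^{2\zeta+1}+k^{\psi}},\;
  \frac{h^{\xi}H^{\zeta}}{k^{2-\psi}h^{2\xi-1}H^{2\zeta-1}\eta^{-1}+k^{\psi}}
  \Big\},
\end{equation*}
and then fixes $\psi = 1$, $\xi = \frac{1}{2}$, and $\zeta = \frac{1}{2}, 0, -\frac{1}{2}$ in the three regimes. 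Your exact one-dimensional maximization of $f(x) = x/(\mu C x^{2}+1)$ at $x_{\star} = (\mu C)^{-1/2}$ is cleaner and in one respect sharper: it yields the closed-form optimizer $\gamma_{\text{opt}} = \frac{1}{2}(1+(\mu C)^{-1/2})$ of the bound rather than only the correct exponents, and your endpoint discussion (if $\mu C < 1$ the maximum sits at $\gamma = 1$ and one falls back on the additive Schwarz rate (\ref{eq:IPHASM})) makes explicit a point the paper leaves implicit.

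However, there is one step you must add. The contraction estimate of Theorem \ref{thm:main} --- see (\ref{eq:contraction}) --- involves the \emph{minimum} of the two branches, because a general decomposition contains floating and non-floating subdomains simultaneously; you instead fix $C$ at the floating value $(H\eta)^{-1}$ and optimize that single branch, which by itself only bounds the minimum from above, not from below. The repair is short: for fixed $x>0$ the map $C \mapsto x/(\mu C x^{2}+1)$ is decreasing, and in all three regimes one has $(H\eta)^{-1} = \tau/H \geq c\,H$ (equivalently $H^{2}\eta$ bounded, which holds since $\tau \geq c H^{2}$ and $H \leq H_{\Omega}$), so up to constants the floating branch is pointwise the smaller of the two. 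Consequently the minimum evaluated at your $x_{\star}$ still has the claimed order: concretely, $\mu H x_{\star}^{2} = H^{2}\eta$ is bounded, so the non-floating branch at $x_{\star}$ is at least a constant multiple of $x_{\star} = (\mu C)^{-1/2}$, and your three rates follow. Without this observation, the opening claim that ``the optimization decouples completely from the subdomain geometry once $\mu$ and $C$ are fixed'' is unjustified, since the quantity being maximized is a minimum over two different values of $C$. (Also, strictly speaking you obtain $\rho_{\text{opt}} \leq 1 - \frac{1}{2\sqrt{\mu C}}$ rather than equality, since the theorem only provides an upper bound on the contraction factor; that inequality is all the corollary asserts anyway.)
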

Observe that the $h$-dependency and $k$-dependency is weakened by a
square-root compared to (\ref{eq:IPHASM}). Moreover if the time-step
is chosen to scale like a forward Euler time-step, i.e., $O(H^2)$, then
Algorithm \ref{algo:multi} is scalable.

{\em Proof of Theorem \ref{thm:main}.}  We first show that Algorithm
\ref{algo:multi} is well-posed, i.e., we can actually iterate. By
linearity we assume that $f=0$.  We proceed by eliminating $u_i^{(n)}$
for all subdomains and simplify Algorithm \ref{algo:multi} to: for all
subdomains, find $\lambda_i^{(n)}$ such that
\redd{
\begin{equation} \label{eq:interf-iter}
  \gamma \lambda_{i}^{(n)} -
  \frac{1}{2\mu} \Big(\mu - \PDif{}{\NOR_i} \Big) \Hopt_i^{} (\lambda_i^{(n)})
  =
  - (1- \gamma ) \lambda_{j}^{(n-1)} 
  + \frac{1}{2\mu} \Big(\mu - \PDif{}{\NOR_j} \Big)  \Hopt_j^{} (\lambda_j^{(n-1)}),
\end{equation}
}
on $\Gamma_{ij}$ for all $j \in N(i)$, where $N(i)$ is the set of
neighboring subdomains of $\OM_i$. Let us denote the linear operator
on the left-hand side by $\Ropt_i : \Lambda_i \rightarrow \Lambda_i$,
that is
\begin{equation} \label{eq:defRopt}
  \Ropt_i( \varphi_i ) := 
  \gamma \varphi_i 
  - \frac{1}{2\mu} \Big(\mu - \PDif{}{\NOR_i} \Big) \Hopt_i^{} (\varphi_i).
\end{equation}
If we show that $\Ropt_i(\cdot)$ is an invertible operator, then
Algorithm \ref{algo:multi} is well-posed. We show $\Ropt_i(\cdot)$ is
invertible by showing that it is injective:
\begin{lemma} \label{lem:injective}
If $\gamma > \frac{1}{2}$ then the operator $\Ropt_i(\cdot)$ is
injective for all $i=1,\hdots,N_s$. More precisely we have the
estimate
  \begin{equation}
    \norm{ \Ropt_i(\varphi_i) }_{\Gamma_i} \geq \Big( \gamma
    - \frac{1}{2} + c(h,H,k) \Big) \norm{ \varphi_i}_{\Gamma_i},
    \quad \forall \varphi_i \in \Lambda_i,
  \end{equation}
  where 
  \begin{equation*} \normalfont
    c(h,H,k) :=
    \left\{
    \begin{array}{ll}
      c \frac{h}{H} \frac{1}{k^2} & \text{for non-floating subdomains},
      \\
      0 & \text{for floating subdomains}.
    \end{array}
    \right.
  \end{equation*}
\end{lemma}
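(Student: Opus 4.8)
The plan is to test the operator $\Ropt_i(\varphi_i)$ against $\varphi_i$ in the $\Lsp^2(\Gamma_i)$ inner product and extract a coercivity-type lower bound. Write $u_i := \Hopt_i(\varphi_i)$, so that by Definition \ref{HarmExtDef} we have $A_i \uB_i + A_{i\Gamma}\phiB_i = 0$, i.e.\ $u_i$ is the discrete harmonic extension of $\varphi_i$. Compute
\[
  \dotS{\Ropt_i(\varphi_i)}{\varphi_i}_{\Gamma_i}
  = \gamma \norm{\varphi_i}_{\Gamma_i}^2
  - \frac{1}{2\mu}\int_{\Gamma_i}\Big(\mu - \PDif{}{\NOR_i}\Big) u_i \,\varphi_i .
\]
The crucial identity is that the boundary term reproduces (twice) the subdomain energy of the harmonic extension: using $A_i\uB_i = -A_{i\Gamma}\phiB_i$ one recognizes $\int_{\Gamma_i}(\mu u_i - \PDif{u_i}{\NOR_i})\varphi_i$ as $\phiB_i^\top A_{i\Gamma}^\top \uB_i = -\phiB_i^\top A_{i\Gamma}^\top A_i^{-1} A_{i\Gamma}\phiB_i = -\phiB_i^\top B_i \phiB_i$, with $B_i$ as in \eqref{eq:Boperator}. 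Since $B_i$ is symmetric positive semi-definite, this contribution has a favorable sign.

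Next I would split the $\mu\varphi_i$ piece so as to reconstruct the term $\gamma - \tfrac12$. The natural regrouping is
\[
  \dotS{\Ropt_i(\varphi_i)}{\varphi_i}_{\Gamma_i}
  = \Big(\gamma - \tfrac12\Big)\norm{\varphi_i}_{\Gamma_i}^2
  + \frac{1}{2\mu}\,\phiB_i^\top\big(\mu M_{\Gamma_i} - A_{i\Gamma}^\top A_i^{-1} A_{i\Gamma}\big)\phiB_i,
\]
where $M_{\Gamma_i}$ is the interface mass matrix so that $\phiB_i^\top M_{\Gamma_i}\phiB_i = \norm{\varphi_i}_{\Gamma_i}^2$. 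The remaining task is to show that the matrix $\mu M_{\Gamma_i} - B_i$ is positive semi-definite, and in fact bounded below by $2\mu\,c\frac{h}{H}\frac{1}{k^2} M_{\Gamma_i}$ in the non-floating case. This is exactly a sharp upper eigenvalue bound on $B_i$ relative to $\mu M_{\Gamma_i}$, i.e.\ a bound on the eigenvalues of $M_{\Gamma_i}^{-1}B_i$; such bounds are precisely what \cite[Lemma 3.7]{hajian2014analysis} and the refined analysis announced in Example \ref{example} provide. Concretely, the largest eigenvalue of $\mu^{-1} M_{\Gamma_i}^{-1}B_i$ is $\le \tfrac12$ always (giving the bare $\gamma - \tfrac12$), and in the non-floating case one gets the strictly smaller bound $\tfrac12 - c\frac{h}{H}\frac{1}{k^2}$, which yields the extra $c(h,H,k)$ term. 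From a lower bound on $\dotS{\Ropt_i(\varphi_i)}{\varphi_i}_{\Gamma_i}$ together with Cauchy--Schwarz, $\norm{\Ropt_i(\varphi_i)}_{\Gamma_i}\norm{\varphi_i}_{\Gamma_i} \ge \dotS{\Ropt_i(\varphi_i)}{\varphi_i}_{\Gamma_i}$, dividing by $\norm{\varphi_i}_{\Gamma_i}$ gives the stated estimate, and injectivity follows since $\gamma > \tfrac12$ makes the lower constant strictly positive.

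The main obstacle is the sharp spectral estimate on $B_i = A_{i\Gamma}^\top A_i^{-1} A_{i\Gamma}$, i.e.\ controlling the Dirichlet-to-Neumann-type map of the local IPH solver. The two regimes in $c(h,H,k)$ come from whether $\OM_i$ touches the Dirichlet boundary: for a floating subdomain the constant mode is in (or near) the kernel of $A_i$ when $\eta$ is small, so no gap beyond $\gamma-\tfrac12$ can be guaranteed and $c = 0$; for a non-floating subdomain coercivity of $a_i(\cdot,\cdot)$ is uniform and a Poincaré--Friedrichs argument on $\OM_i$ produces the $\frac{h}{H}\frac{1}{k^2}$ improvement. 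I would establish the eigenvalue bound by passing to the energy norm $\Bnorm{(u_i,\varphi_i)}{i}$ of Section \ref{sec:iph}, using that the harmonic extension minimizes this energy among all extensions of $\varphi_i$, and then invoking the coercivity constant $c$ together with a scaling/trace inequality relating $\norm{\varphi_i}_{\Gamma_i}$ to the subdomain energy; tracking the $h$, $H$ and $k$ dependence through the trace and inverse inequalities is the delicate bookkeeping that determines the exact power in $c(h,H,k)$.
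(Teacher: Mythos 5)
Your proposal follows essentially the same route as the paper's proof: test $\Ropt_i(\varphi_i)$ against $\varphi_i$ in $\Lsp^2(\Gamma_i)$, use the harmonic-extension relation to identify the boundary term with the local energy, bound that energy above by $\big(1-c(h,H,k)\big)\mu\norm{\varphi_i}_{\Gamma_i}^2$, and finish with Cauchy--Schwarz. Indeed, since $\phiB_i^\top B_i\phiB_i = \uB_i^\top A_i\uB_i = a_i(u_i,u_i)$ for $u_i=\Hopt_i(\varphi_i)$, your ``spectral bound on $\mu M_{\Gamma_i}-B_i$'' is exactly the paper's Appendix estimate (\ref{eq:A3}), which the paper derives from the coercivity inequalities (\ref{eq:A1})--(\ref{eq:A2}) together with the trace inequality of Lemma \ref{lemma:HDGlowerforB}; that is precisely the coercivity-plus-trace argument, with the floating/non-floating dichotomy, that you sketch in your last paragraph.

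Two local slips should be corrected, because as written they contradict your own displayed formulas. First, the sign of the boundary term: since $a_{i\Gamma}(v_i,\varphi)=\int_{\Gamma_i}\big(\PDif{v_i}{\NOR_i}-\mu v_i\big)\varphi$ and $a_{i\Gamma}(u_i,\varphi_i)=\uB_i^\top A_{i\Gamma}\phiB_i=-\phiB_i^\top B_i\phiB_i$ for the harmonic extension, one gets $\int_{\Gamma_i}\big(\mu u_i-\PDif{u_i}{\NOR_i}\big)\varphi_i=+\phiB_i^\top B_i\phiB_i$, not $-\phiB_i^\top B_i\phiB_i$. This term therefore enters $\dotS{\Ropt_i(\varphi_i)}{\varphi_i}$ with a \emph{minus} sign: it is the unfavorable term, and the entire content of the lemma is that it can nonetheless be bounded above by $\mu\norm{\varphi_i}_{\Gamma_i}^2$ minus a gap. (If it truly had a ``favorable sign,'' the lemma would be trivial with constant $\gamma$ in place of $\gamma-\frac12$ and no spectral information on $B_i$ would be needed at all.) Your displayed regrouping $\big(\gamma-\tfrac12\big)\norm{\varphi_i}_{\Gamma_i}^2+\tfrac{1}{2\mu}\phiB_i^\top\big(\mu M_{\Gamma_i}-B_i\big)\phiB_i$ is only consistent with the corrected sign, so the remainder of your argument does go through. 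Second, a normalization slip: the bare bound that recovers $\gamma-\frac12$ is $\sigma_{\max}\big(\mu^{-1}M_{\Gamma_i}^{-1}B_i\big)\leq 1$, equivalently $\sigma_{\max}\big(A_{\Gamma_i}^{-1}B_i\big)\leq\frac12$ since $A_{\Gamma_i}=2\mu M_{\Gamma_i}$; your stated bound $\sigma_{\max}\big(\mu^{-1}M_{\Gamma_i}^{-1}B_i\big)\leq\frac12$ would instead yield the lower bound $\gamma-\frac14$, which is false in general.
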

\begin{proof}
We multiply $\Ropt_i(\varphi_i)$ by $\varphi_i$ and integrate over
$\Gamma_i$,
\begin{equation*}
  \int_{\Gamma_i} \Ropt_i(\varphi_i) \, \varphi_i = \gamma \norm{
    \varphi_i}_{\Gamma_i}^2 + \frac{1}{2\mu} a_{i\Gamma}(u_i,\varphi_i),
\end{equation*}
where $u_i := \Hopt_i( \varphi_i )$. Recall that if $u_i$ is the
harmonic extension of $\varphi_i$ then $a_i(u_i,u_i) +
a_{i\Gamma}(u_i,\varphi_i) = 0$. Therefore we have
$
  \int_{\Gamma_i} \Ropt_i(\varphi_i) \, \varphi_i = \gamma \norm{
    \varphi_i}_{\Gamma_i}^2 - \frac{1}{2\mu} a_{i}(u_i,u_i).
$
We can show that $a(u_i,u_i) \leq \big(1 - c(h,H,k) \big) \mu
\norm{\varphi_i}_{\Gamma_i}^2$, see Appendix \ref{sec:proofofestimate}, and obtain
\begin{equation*}
  \int_{\Gamma_i} \Ropt_i(\varphi_i) \, \varphi_i \geq \big( \gamma -
  \frac{1}{2} + c(h,H,k) \big) \norm{ \varphi_i}_{\Gamma_i}^2.
\end{equation*}
If $\gamma > \frac{1}{2}$, then the right-hand side is
positive. Now we apply the Cauchy-Schwarz inequality to the left-hand
side and obtain $\norm{ \Ropt_i(\varphi_i) }_{\Gamma_i} \geq \big(
\gamma - \frac{1}{2} + c(h,H,k) \big) \norm{ \varphi_i}_{\Gamma_i}$
which completes the proof.
\end{proof}

\redd{Note that Lemma \ref{lem:injective} provides a lower bound
  { for the} norm-equivalence between $\norm{
    \Ropt_i(\cdot) }_{\Gamma_i}$ and the $\Lsp^2$-norm, i.e.,
  $\norm{\cdot}_{\Gamma_i}$. The upper bound in the norm-equivalence
  can be also obtained{, as we show in the following
    proposition.}
\begin{proposition}[Norm equivalence]
{ The two norms $\norm{\Ropt_i(\cdot)}_{\Gamma_i}$ and
  $\norm{\cdot}_{\Gamma_i}$ are equivalent,}
  \begin{equation*}
    C \norm{\varphi_i}_{\Gamma_i} \geq
    \norm{ \Ropt_i(\varphi_i) }_{\Gamma_i} \geq \Big( \gamma
    - \frac{1}{2} + c(h,H,k) \Big) \norm{ \varphi_i}_{\Gamma_i},
    \quad \forall \varphi_i \in \Lambda_i,
  \end{equation*}
  where $C>0$ is independent of $h, H, \alpha$ and $\eta$. Here
  $c(h,H,k)$ is the constant defined in Lemma \ref{lem:injective}.
\end{proposition}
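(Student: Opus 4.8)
The plan is to establish the upper bound $\norm{\Ropt_i(\varphi_i)}_{\Gamma_i} \leq C \norm{\varphi_i}_{\Gamma_i}$, since the lower bound is already furnished by Lemma \ref{lem:injective}. I would start from the definition $\Ropt_i(\varphi_i) = \gamma \varphi_i - \frac{1}{2\mu}\big(\mu - \PDif{}{\NOR_i}\big)\Hopt_i(\varphi_i)$ and apply the triangle inequality to split it into the term $\gamma \norm{\varphi_i}_{\Gamma_i}$ and the harmonic-extension term. The first term is trivially bounded by $\gamma \norm{\varphi_i}_{\Gamma_i}$, and since $\frac{1}{2} < \gamma \leq 1$ this contributes only a bounded constant. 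The real work is controlling the second term $\frac{1}{2\mu}\norm{\big(\mu - \PDif{}{\NOR_i}\big)u_i}_{\Gamma_i}$ where $u_i := \Hopt_i(\varphi_i)$.

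For that term I would again use the triangle inequality to separate $\frac{1}{2\mu}\norm{\mu u_i}_{\Gamma_i} = \frac{1}{2}\norm{u_i}_{\Gamma_i}$ from $\frac{1}{2\mu}\norm{\PDif{u_i}{\NOR_i}}_{\Gamma_i}$. The first piece requires a trace-type bound showing that the boundary trace of the harmonic extension is controlled by the generator $\varphi_i$; the key estimate behind this is already available through the energy bound $a_i(u_i,u_i) \leq (1 - c(h,H,k))\mu \norm{\varphi_i}_{\Gamma_i}^2$ proved in Appendix \ref{sec:proofofestimate}, combined with the definition of the subdomain norm $\Bnorm{(u_i,\varphi_i)}{i}$, whose last term $\mu \norm{u_i - \varphi_i}_{\Gamma_i}^2$ lets me bound $\norm{u_i}_{\Gamma_i}$ in terms of $\norm{\varphi_i}_{\Gamma_i}$ and the energy. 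The normal-derivative piece is the delicate one: I would invoke a discrete trace (inverse) inequality of the form $\norm{\PDif{u_i}{\NOR_i}}_{\Gamma_i} \leq C \frac{k^2}{h}\,\norm{u_i}_{\Ti{i}}$ or, more sharply, a bound against $\norm{\nabla u_i}_{\Ti{i}}$ scaled by $k^2/h = \mu/\alpha$, so that after division by $\mu$ the factor $k^2/(h\mu) = 1/\alpha$ stays bounded independently of $h$, $k$, and $H$.

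The main obstacle is tracking that the scaling of the normal-derivative trace term is exactly compensated by the $\frac{1}{2\mu}$ prefactor. Concretely, the discrete trace inequality produces a factor $\mu$ (through $k^2/h$) acting on the interior energy $\norm{\nabla u_i}_{\Ti{i}} \leq a_i(u_i,u_i)^{1/2} \leq \sqrt{\mu}\,\norm{\varphi_i}_{\Gamma_i}$; multiplying these and dividing by $2\mu$ must leave a bound proportional to $\norm{\varphi_i}_{\Gamma_i}$ with an $\alpha$-dependent but $h,H,\eta$-independent constant. I would verify that each invocation of the inverse inequality uses only the quasi-uniformity and shape-regularity of the mesh, so that the emerging constant $C$ absorbs the polynomial-degree factors without introducing hidden $H$-dependence. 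Since the proposition explicitly claims $C$ is independent of $h$, $H$, $\alpha$ and $\eta$, I would take care that the $\frac{1}{2}\norm{u_i}_{\Gamma_i}$ contribution—which could a priori pick up an $\eta$-dependence through the floating versus non-floating distinction—is in fact uniformly bounded, relying on the structural fact that increasing $\eta$ only strengthens coercivity and hence cannot enlarge the trace of the harmonic extension.

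Assembling the three bounds by the triangle inequality then yields $\norm{\Ropt_i(\varphi_i)}_{\Gamma_i} \leq \big(\gamma + \frac{1}{2} + C_0/\alpha\big)\norm{\varphi_i}_{\Gamma_i} =: C \norm{\varphi_i}_{\Gamma_i}$, which combined with Lemma \ref{lem:injective} gives the claimed two-sided equivalence. I expect the entire argument to be short once the correct discrete trace inequality is cited, the only genuinely new ingredient beyond Lemma \ref{lem:injective} being that single inverse estimate on the normal derivative.
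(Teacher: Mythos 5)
Your proof is correct, but it follows a genuinely different route from the paper's. The paper settles the upper bound algebraically: it invokes Lemma \ref{lem:RtoB}, which gives $c\,\mu\,\norm{\Ropt_i(\varphi_i)}_{\Gamma_i}^2 \leq \phiB_i^\top B_i^{}\phiB_i^{}$ with $B_i^{} = A_{i\Gamma}^\top A_i^{-1}A_{i\Gamma}^{}$, and then the spectral estimate $\phiB_i^\top B_i^{}\phiB_i^{} \leq \mu\norm{\varphi_i}_{\Gamma_i}^2$ from \cite[Lemma 3.7]{hajian2014analysis}, so the entire upper bound is delegated to eigenvalue bounds for $A_{\Gamma}^{-1}B_i^{}$ inherited from the two-subdomain paper. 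You argue variationally instead: triangle inequality on the definition of $\Ropt_i$, control of the trace and gradient of the harmonic extension $u_i=\Hopt_i(\varphi_i)$ via the appendix coercivity estimates, and a discrete (polynomial inverse) trace inequality for $\PDif{u_i}{\NOR_i}$, whose factor $k^2/h=\mu/\alpha$ is exactly cancelled by the prefactor $1/(2\mu)$, leaving a constant of the form $\gamma+\tfrac{1}{2}(1+c^{-1/2})+O(\alpha^{-1/2})$, admissible since $\alpha$ is bounded below. Your route is more self-contained (it needs neither $B_i$ nor the companion paper's spectral estimates, only one standard inverse estimate), and it applies verbatim to floating subdomains, whereas Lemma \ref{lem:RtoB} is stated for non-floating ones; the paper's route is shorter given that Lemma \ref{lem:RtoB} is proved anyway for Section \ref{sec:sharpest}. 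The two arguments are in fact cousins: the proof of Lemma \ref{lem:RtoB} performs the very same triangle-inequality split, but then treats the flux term $z_i=(\mu-\partial_{\NOR_i})u_i$ through the identity $\BO{z}_i = M_{\Gamma_i}^{-1}B_i^{}\phiB_i^{}$ and eigenvalue bounds rather than through an inverse inequality.

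One repair is needed. The estimate you cite to control the harmonic extension, namely (\ref{eq:A3}), $a_i(u_i,u_i)\leq(1-c(h,H,k))\mu\norm{\varphi_i}_{\Gamma_i}^2$, is an upper bound on the energy and by itself gives no control of $\norm{u_i-\varphi_i}_{\Gamma_i}$ or $\norm{\nabla u_i}_{\Ti{i}}$. What your argument actually requires is the coercivity estimate (\ref{eq:A2}), $\mu\norm{\varphi_i}_{\Gamma_i}^2 - a_i(u_i,u_i) \geq c\,\Bnorm{(u_i,\varphi_i)}{i}^2$, combined with $a_i(u_i,u_i)\geq 0$ (positive definiteness of $A_i$): this yields $\Bnorm{(u_i,\varphi_i)}{i}^2 \leq c^{-1}\mu\norm{\varphi_i}_{\Gamma_i}^2$, and both $\mu\norm{u_i-\varphi_i}_{\Gamma_i}^2$ and $\norm{\nabla u_i}_{\Ti{i}}^2$ are individual terms of $\Bnorm{(u_i,\varphi_i)}{i}^2$. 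With that substitution the argument closes. Also, your worry about a hidden $\eta$-dependence through the floating versus non-floating distinction is moot: the bound $\norm{u_i}_{\Gamma_i}\leq\norm{u_i-\varphi_i}_{\Gamma_i}+\norm{\varphi_i}_{\Gamma_i}$ never uses that dichotomy (it only enters when bounding $\norm{\varphi_i}_{\Gamma_i}$ from below by the subdomain seminorm), and the constant in (\ref{eq:A2}) is $\eta$-independent because the $\eta$-terms appear identically on both sides.
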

\begin{proof}
  The lower bound estimate is from Lemma \ref{lem:injective}. For the
  upper bound we use the estimate from Lemma \ref{lem:RtoB} (which
  will appear in Section \ref{sec:sharpest}). More precisely we have
  \begin{equation*}
    \phiB_i^\top B_i^{} \phiB_i^{} \geq c \mu
    \norm{\Ropt_i(\varphi_i)}_{\Gamma_i}^2,
  \end{equation*}
  where $B_i := A_{i\Gamma}^{\top} A_i^{-1} A_{i\Gamma}^{}$ (see
  Example \ref{example}). We then use the estimate for the eigenvalues
  of $B_i$, i.e., \cite[Lemma 3.7]{hajian2014analysis} to obtain
  \begin{equation*}
     \mu \norm{\varphi_i}_{\Gamma_i}^2 \geq 
    \phiB_i^\top B_i^{} \phiB_i^{} \geq c \mu
    \norm{\Ropt_i(\varphi_i)}_{\Gamma_i}^2.
  \end{equation*}
  This completes the proof.
\end{proof}
}

Since $\Ropt_i(\cdot)$ is linear and injective we conclude that it
induces a local norm on $\Lambda_i$. We can also define a global norm
on the space of $\prod_{i=1}^{N_s} \Lambda_i$ by
\begin{equation} \label{eq:globNorm}
  \norm{ \Ropt({\varphi}) }^2 := 
  \sum_{i=1}^{N_s} \norm{ \Ropt_i(\varphi_i) }_{\Gamma_i}^2, 
  \quad \forall {\varphi} \in \prod_{i=1}^{N_s} \Lambda_i,
\end{equation}
where ${\varphi} := (\varphi_1, \varphi_2, \hdots,
\varphi_{N_s})$. This turns out to be the right norm for the
convergence analysis of  Algorithm \ref{algo:multi}.

We can now show that Algorithm \ref{algo:multi} converges with a
concrete contraction factor estimate.  The right-hand side of the
iteration equation (\ref{eq:interf-iter}) can be simplified to
\begin{equation}
  \Ropt_i^{}(\varphi_i^{(n)}) = (2\gamma-1) \varphi_j^{(n-1)} -
  \Ropt_j^{}(\varphi_j^{(n-1)}),
\end{equation}
on $\Gamma_{ij}$ for all $j \in N(i)$. Note that $2\gamma - 1$ is
strictly-positive with our condition $\gamma > \frac{1}{2}$. For a
given subdomain, say $\OM_i$, we take the $\Lsp^2$-norm on both
sides. To simplify the presentation, we suppress the iteration index
for the moment, but terms on the left-hand side are evaluated at
iteration $(n)$ while on the right-hand side they are evaluated at
iteration index $(n-1)$:
\begin{equation*}
  \begin{array}{rcl}
  \norm{ \Ropt_i(\varphi_i) }_{\Gamma_{ij}}^2 &=&
  \norm{ \Ropt_j(\varphi_j) - (2\gamma-1) \varphi_j }_{\Gamma_{ij}}^2
  \\ 
  &=& \norm{ \Ropt_j(\varphi_j) }_{\Gamma_{ij}}^2 + \norm{
    (2\gamma-1)\varphi_j }_{\Gamma_{ij}}^2 -
  2 (2\gamma-1) \int_{\Gamma_{ij}} \Ropt_j(\varphi_j) \, \varphi_j
  \\
  &=& \norm{ \Ropt_j(\varphi_j) }_{\Gamma_{ij}}^2 + 
  \big[ (2\gamma-1)^2 - 2(2\gamma-1)\gamma \big] \norm{ \varphi_j
  }_{\Gamma_{ij}}^2 
  \\
  && \qquad \qquad \quad
  + \frac{1}{\mu} (2\gamma-1) \int_{\Gamma_{ij}} \big( \mu u_j -
  \PDif{u_j}{\NOR_j} \big) \varphi_j
  \\
  &=& \norm{ \Ropt_j(\varphi_j) }_{\Gamma_{ij}}^2 -
  (2\gamma-1) \Big[ \norm{ \varphi_j
  }_{\Gamma_{ij}}^2 
  - \frac{1}{\mu} \int_{\Gamma_{ij}} \big( \mu u_j -
  \PDif{u_j}{\NOR_j} \big) \varphi_j \Big].
  \end{array}
\end{equation*}
Then we sum over all interfaces of $\Omega_i$ and all subdomains to obtain
\begin{equation} \label{eq:proof}
  \begin{array}{rcl}
    \norm{ \Ropt(\varphi) }^2   &=&  
    \sum_{i=1}^{N_s} \sum_{j \in N(i)} 
  \norm{ \Ropt_j(\varphi_j) }_{\Gamma_{ij}}^2 
  \\
  && \qquad \qquad \quad 
  -  (2\gamma-1) \mu^{-1} \Big[ \mu \norm{ \varphi_j
  }_{\Gamma_{ij}}^2 
  -  \int_{\Gamma_{ij}} \big( \mu u_j -
  \PDif{u_j}{\NOR_j} \big) \varphi_j \Big]
  \\
  &=&
  \norm{ \Ropt(\varphi) }^2 
  \\
  && \qquad
  - (2\gamma-1) \mu^{-1} \sum_{m=1}^{N_s} \Big[ \mu \norm{ \varphi_m}_{\Gamma_m}^2 
  +  a_{m\Gamma}(u_m,\varphi_m) \Big]
  \\
  &=&
  \norm{ \Ropt(\varphi) }^2 
  \\
  && \qquad
  - (2\gamma-1) \mu^{-1} \sum_{m=1}^{N_s} \Big[ \mu \norm{ \varphi_m }_{\Gamma_m}^2 
  - a_{m}(u_m,u_m) \Big]
  \\
  &\leq&
  \norm{ \Ropt(\varphi) }^2 
  - c (2\gamma-1) \mu^{-1} \sum_{m=1}^{N_s} \norm{ (u_m, \varphi_m) }_{m}^2,
  \end{array}
\end{equation}
where for the left-hand side we used
\begin{equation*}
  \sum_{i=1}^{N_s} \sum_{j \in N(i)} \norm{ \Ropt_i(\varphi_i) }_{\Gamma_{ij}}^2
  =
  \sum_{i=1}^{N_s} \norm{ \Ropt_i(\varphi_i) }_{\Gamma_i}^2
  =:
  \norm{ \Ropt(\varphi) }^2,
\end{equation*}
and for the right-hand side we used the coercivity inequality
\begin{equation*}
  \mu \norm{ \varphi_m }_{\Gamma_m}^2 - a_m(u_m,u_m) \geq c 
  \norm{(u_m,\varphi_m )}_m^2,
\end{equation*}
see Appendix \ref{sec:proofofestimate} for details.  Note that
$\norm{(u_m,\varphi_m )}_m$ is {\it subdomain-wise} positive definite
if $\eta>0$. More precisely we can show that if $\eta>0$ then for all
subdomains, even {\it floating} ones, we have the estimate
\begin{equation} \label{eq:equiv}
  \norm{ \Ropt_m(\varphi_m) }_{\Gamma_m}^2 \leq 
  \Big( (2 \gamma - 1)^2 C(H,\eta) + \mu^{-1} \Big)
  \norm{ (u_m, \varphi_m) }_{m}^2,
\end{equation}
where 
\begin{equation}
  C(H,\eta) :=
  \left\{
  \begin{array}{ll}
    H & \text{for non-floating subdomain},
    \\
    \frac{1}{H \eta} & 
    \text{for floating subdomain}.
  \end{array}
  \right.
\end{equation}
Note that (\ref{eq:equiv}) makes sense only if $\eta>0$ since
$\norm{(\cdot,\cdot)}_m$ is only a semi-norm for floating subdomains
if $\eta=0$ while $\norm{\Ropt_m(\cdot)}_{\Gamma_m}$ is a norm, see
Appendix \ref{sec:proofofestimate}, in particular
(\ref{eq:lemmaFLOATING}) and (\ref{eq:lemmaNONFLOATING}). We have
ignored the $\eta \norm{u_i}_{\OM_i}$ term in
(\ref{eq:lemmaNONFLOATING}) for simplicity of the exposition; the $\eta
\norm{u_i}_{\OM_i}$ term in (\ref{eq:lemmaNONFLOATING}) will be
exploited in Section \ref{sec:sharpest}.

We then insert the norm estimate (\ref{eq:equiv}) into the last
inequality of (\ref{eq:proof}) and reintroduce the iteration index to
obtain 
\begin{equation} \label{eq:contraction}
  \norm{ \Ropt(\varphi^{(n)}) }^2 \leq
  \Big( 
  1 - \min \Big\{ \frac{2\gamma-1}{\mu(2\gamma-1)^2 H + 1},
    \frac{2\gamma-1}{\mu(2\gamma-1)^2 (H\eta)^{-1} + 1} \Big\}
  \Big)
  \norm{ \Ropt(\varphi^{(n-1)}) }^2,
\end{equation}
which shows convergence and proves Theorem \ref{thm:main}.

{\em Proof of Corollary \ref{cor:osm}.}
We need to choose a suitable $\gamma > \frac{1}{2}$ to achieve the
best possible contraction factor. In order to weaken dependencies on
the mesh parameter, subdomain diameter and polynomial degree, we make
for the optimization parameter the ansatz
\begin{equation}
  \gamma = \frac{1}{2} \Big( 1 + \frac{h^\xi H^\zeta}{k^\psi} \Big),
\end{equation}
with $\xi, \zeta, \psi \in \RE$ to be chosen. We would like to minimize the
contraction factor, i.e., 
\begin{equation}
  \rho_\text{opt} \leq 1 -
  \max_{\xi, \zeta, \psi} \, \min 
  \Big\{ 
  \frac{ {h^\xi H^\zeta} }{
    k^{2-\psi} h^{2\xi - 1} H^{2\zeta + 1} + k^{\psi}
    }
  ,
  \frac{ {h^\xi H^\zeta} }{
    k^{2-\psi} h^{2\xi - 1} H^{2\zeta - 1} \eta^{-1} + k^{\psi}
    }
  \Big\}.
\end{equation}
\redd{
\begin{remark}[On the choice of $\gamma$] \label{remark:phat}
  It has been shown in \cite{hajian2014} and \cite[Section
    3.2]{hajianthesis} that the transmission condition between two
  {subdomains} in Algorithm \ref{algo:multi} is equivalent
  at the continuous level to
  \begin{equation*}
    \Big( (2 \gamma - 1 ) \mu \, u_1 + \PDif{u_1}{\NOR_1} \Big)^{(n)} 
    = \Big( (2 \gamma - 1 ) \mu \, u_2 + \PDif{u_2}{\NOR_1} \Big)^{(n-1)}.
  \end{equation*}
  It has been shown ({at the continuous level
    \cite{ganderos}}) that the optimal choice of the Robin parameter
  is $(2\gamma - 1)\mu = O(h^{-1/2})$. This translates to choosing
  $\gamma = \frac{1}{2}(1+\sqrt{h})$. We will show that this is also
  the optimal scaling at the discrete level. {In
    \cite{ganderos}, it has been shown that the optimal scaling of the
    Robin parameter is $O( (h L)^{-1/2})$ where $L$ is the length of
    the interface and it can be viewed as a measure of the diameter of
    a subdomain, i.e., $H$. This motivates our choice of optimization
    parameter, i.e., $\gamma$.}
\end{remark}
}

When dealing with parabolic problems, $\eta = {\tau^{-1}}$ and
$\tau$ is the time-step. Therefore it is reasonable to optimize
$\gamma$ for different choices of the time-step.

\begin{itemize}
  \item $\tau = O(1)$: we start with the dependence on the
    polynomial degree. Observe that the weakest dependence is achieved if
    we let $\psi=1$. This leads to $\rho \leq 1 - O(\frac{1}{k})$,
    which compares very favorably to (\ref{eq:IPHASM}). Now we
    consider the case where $H$ is fixed and we refine the mesh, $h
    \rightarrow 0$. Then $\xi=\frac{1}{2}$ is the optimal choice which
    yields $\rho \leq 1 - O( \frac{\sqrt{h}}{k} )$. This leads
    to a simplified bound for $\rho_\text{opt}$, namely
    \begin{equation*}
      \rho_\text{opt} \leq 1 -
      \max_{\zeta} \, \min \Big\{ 
      \frac{ H^\zeta }{ H^{2\zeta+1} + 1}
      ,
      \frac{ H^\zeta }{ H^{2\zeta-1} + 1}
      \Big\}
      O( \frac{\sqrt{h} }{k} )
      .      
    \end{equation*}
    The optimal value for $\zeta$ is therefore $\frac{1}{2}$. We thus
    obtain the optimal parameter and corresponding contraction factor
    \begin{equation}
      \gamma_\text{opt} := \frac{1}{2} \Big( 1 + \frac{ \sqrt{h H}
      }{k} \Big),
      \quad
      \rho_\text{opt} \leq 1 - O \big( \frac{ \sqrt{h H} }{k} \big),
      \quad \text{if } \tau = O(1).
    \end{equation}
  \item $\tau = O(H)$: The best parameters with respect to $k$ and $h$
    follow the same argument as before. For optimization with respect
    to $H$ we have now
    \begin{equation*}
      \rho_\text{opt} \leq 1 -
      \max_{\zeta} \, \min \Big\{ 
      \frac{ H^\zeta }{ H^{2\zeta+1} + 1}
      ,
      \frac{ H^\zeta }{ H^{2\zeta} + 1}
      \Big\}
      O( \frac{\sqrt{h} }{k} )
      .      
    \end{equation*}
    In this case we can eliminate the $H$-dependence by choosing
    $\zeta=0$. Hence we have
    \begin{equation}
      \gamma_\text{opt} := \frac{1}{2} \Big( 1 + \frac{ \sqrt{h} }{k} \Big),
      \quad
      \rho_\text{opt} \leq 1 - O \big( \frac{ \sqrt{h} }{k} \big),
      \quad \text{if } \tau = O(H).
    \end{equation}
    \item $\tau=O(H^2)$: This case is comparable to using a forward
      Euler method where $\tau$ is required to be proportional to
      $h^2$. {This is a typical constraint when dealing with parabolic
        problems and accurate trajectories in time are needed, but one
        could still take larger time steps in our setting than with
        forward Euler due to a larger constant}. We proceed as before
      by choosing the same parameters with respect to $k$ and $h$. For
      the $H$-dependence we have
    \begin{equation*}
      \rho_\text{opt} \leq 1 -
      \max_{\zeta} \, \min \Big\{ 
      \frac{ H^\zeta }{ H^{2\zeta+1} + 1}
      ,
      \frac{ H^\zeta }{ H^{2\zeta+1} + 1}
      \Big\}
      O( \frac{\sqrt{h} }{k} )
      .      
    \end{equation*}
    The optimal parameter hence is $\zeta = - \frac{1}{2}$ which
    yields
    \begin{equation}
      \gamma_\text{opt} := \frac{1}{2} \Big( 1 + \sqrt{\frac{h}{H}} \frac{1 }{k} \Big),
      \quad
      \rho_\text{opt} \leq 1 - O \big( \sqrt{\frac{h}{H}} \frac{1 }{k} \big),
      \quad \text{if } \tau = O(H^2).
    \end{equation}    

 Note that this choice of $\gamma_\text{opt}$ is still feasible since
 $h \leq H$ and therefore $\gamma_\text{opt} \leq 1$. This shows that
 the method is weakly scalable if we choose a small enough time-step,
 without the need of a coarse solver. A similar result for the
 additive Schwarz method and FEM exists, see \cite[Theorem
   4]{cai1991}.
\end{itemize}
This completes the proof of Corollary \ref{cor:osm}. 

\subsection{A refined contraction factor with respect to the time-step}
\label{sec:sharpest}
\redd{ In this section we would like to investigate the effect of the
  time-step, $\tau = \eta^{-1}$, on the contraction factor while the
  number of subdomains {is} {\it fixed}, e.g., in the case
  of two subdomains.  }This has so far not been addressed, neither in
\cite{qin0} nor in the authors' paper \cite{hajian2014analysis} which
deals with two subdomains only.

\redd{Suppose for the moment that we have two subdomains. Then as mentioned
in Example \ref{example} and proved in \cite{hajian2014analysis} the
convergence of the OSM is governed by the eigenvalues of
$A_\Gamma^{-1}B_i^{}$ where $ B_i^{} := A_{i\Gamma}^\top A_{i}^{-1}
A_{i\Gamma}^{}$. We would like to obtain eigenvalue estimates that
depend on $\eta$. This is stated in the following lemma which improves
the estimate in \cite[Lemma 3.7]{hajian2014analysis}.

\begin{lemma} \label{lem:refinedB}
  Let $B_i : = A_{i\Gamma}^\top A_{i}^{-1} A_{i\Gamma}^{}$ for $i=1,2$
  where $A_i$ and $A_{i\Gamma}$ correspond to the bilinear forms
  defined in (\ref{eq:localsolvers}) and (\ref{eq:deftildea}),
  respectively. {Then for $\eta \geq 0$ we have the
    estimate}
  \begin{equation*}
    \phiB^\top_{} B_i \phiB_{}^{} \leq
    \bigg( \frac{ 1 }{ 1 + \frac{C \eta \,  h^2}{1 + C \eta \, h^2} } \bigg)
    \Big( 1 - {c} \frac{h}{H \alpha} \Big) \mu \norm{\varphi}_\Gamma^2,
  \end{equation*}
  where $c$ and $C$ are positive constants which are independent of
  $h, H, \alpha$ and $\eta$.
\end{lemma}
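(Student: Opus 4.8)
The plan is to reduce $\phiB^\top B_i \phiB$ to the energy of a discrete harmonic extension, isolate the $\eta$-dependence by a matrix-monotonicity (resolvent) argument, and supply the remaining $\eta$-independent factor from the already known estimate of \cite[Lemma 3.7]{hajian2014analysis}. First I would set $g := A_{i\Gamma}^{}\phiB$ and let $\uB_i = -A_i^{-1} g$ be the coefficient vector of the discrete harmonic extension $u_i := \Hopt_i(\varphi_i)$ of Definition \ref{HarmExtDef}. Then
\begin{equation*}
  \phiB^\top B_i \phiB = g^\top A_i^{-1} g = \uB_i^\top A_i \uB_i = a_i(u_i,u_i),
\end{equation*}
so the claimed eigenvalue bound is exactly an upper bound on the energy $a_i(u_i,u_i)$ of the harmonic extension generated by $\varphi_i$; this is consistent with the coercivity/appendix estimates already used for the case $\eta = 0$ in Lemma \ref{lem:injective}.

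Next I would split the local solver matrix as $A_i = \eta M_i + K_i$, where $M_i$ is the subdomain mass matrix (associated with $\Lnorm{\cdot}_{\Ti{i}}^2$) and $K_i$ is the $\eta$-independent part of $a_i(\cdot,\cdot)$, which for a non-floating subdomain is symmetric positive definite by coercivity. The crux is the operator comparison
\begin{equation*}
  \eta M_i + K_i \;\geq\; \Big( 1 + \frac{C \eta h^2}{1 + C \eta h^2} \Big) K_i
\end{equation*}
understood in the sense of symmetric matrices. I would obtain it from a discrete inverse inequality of the form $K_i \leq \frac{1}{C h^2} M_i$: this gives $M_i \geq C h^2 K_i \geq \frac{C h^2}{1 + C\eta h^2} K_i$, hence $\eta M_i \geq \frac{C\eta h^2}{1+C\eta h^2} K_i$, which rearranges to the displayed comparison. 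By monotonicity of the matrix inverse it follows that $A_i^{-1} \leq \big( 1 + \frac{C\eta h^2}{1+C\eta h^2} \big)^{-1} K_i^{-1}$, and therefore
\begin{equation*}
  \phiB^\top B_i \phiB = g^\top A_i^{-1} g \leq \frac{1}{1 + \frac{C\eta h^2}{1 + C\eta h^2}} \, g^\top K_i^{-1} g.
\end{equation*}

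Finally, $g^\top K_i^{-1} g$ is nothing but $\phiB^\top B_i \phiB$ evaluated at $\eta = 0$, for which $g^\top K_i^{-1} g \leq \big( 1 - c \frac{h}{H \alpha} \big)\mu \norm{\varphi}_{\Gamma}^2$ is the non-refined bound of \cite[Lemma 3.7]{hajian2014analysis}; chaining the two inequalities proves the lemma. I expect the main obstacle to be establishing the inverse inequality $K_i \leq \frac{1}{C h^2} M_i$ with a constant $C$ that is \emph{independent of $\alpha$}: the penalty and consistency terms in $K_i$ carry the factor $\mu = \alpha k^2/h$, so a crude bound picks up $\alpha$, and one must instead argue on the subspace spanned by $K_i^{-1} g$ (equivalently, use the coercivity semi-norm $\Bnorm{(\cdot,\cdot)}{i}$ in place of the full matrix $K_i$) to keep $C$ free of $\alpha$. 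A secondary difficulty is the floating case, where $K_i$ is only positive semi-definite with the constants in its kernel; there $K_i^{-1}$ must be read on the orthogonal complement of that kernel and the comparison invoked only on it, which is precisely why the refined statement becomes informative for all $\eta \geq 0$.
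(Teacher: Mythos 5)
Your proposal is correct for the setting of the lemma, and it takes a genuinely different route from the paper's own proof, even though both arguments share the same raw ingredients. The paper also writes $\phiB^\top B_i \phiB = \uB_i^\top (K_i + \eta M_i)\uB_i$ with the splitting $A_i = \eta M_i + K_i$, and it invokes exactly the same spectral fact $\sigma(M_i^{-1}K_i) \subset [c_1, c_2 h^{-2}]$ (citing \cite{castillo}); but from there it proceeds \emph{additively}: it forms the augmented matrix $\hat{A} = \MATT{K_i + \eta M_i}{A_{i\Gamma}}{A_{i\Gamma}^\top}{\half A_\Gamma}$, uses its coercivity together with the trace inequality of Lemma \ref{lemma:HDGlowerforB} to get $\mu\norm{\varphi}_\Gamma^2 - \phiB^\top B_i \phiB \geq \frac{\eta}{C h^{-2}+\eta}\,\phiB^\top B_i \phiB + \frac{c}{H}\norm{\varphi}_\Gamma^2$, and rearranges; in other words it re-derives the $\eta$-independent factor $\big(1 - c\frac{h}{H\alpha}\big)$ from scratch inside the proof. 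You instead factor the estimate \emph{multiplicatively}: Loewner monotonicity of inversion applied to $A_i \geq \kappa K_i$ with $\kappa := 1 + \frac{C\eta h^2}{1+C\eta h^2}$ peels off the $\eta$-dependence, and the leftover quantity $g^\top K_i^{-1} g$ is precisely the $\eta = 0$ bound of \cite[Lemma 3.7]{hajian2014analysis} (equivalently, estimate (\ref{eq:A3}) at $\eta = 0$), cited as a black box. Your route is more modular, and it actually buys something sharper for free: your comparison gives $A_i \geq (1 + C\eta h^2) K_i$ directly, hence the factor $\frac{1}{1+C\eta h^2}$, which tends to zero as $\eta h^2 \to \infty$ (consistent with $B_i \to 0$ as $\eta \to \infty$), whereas the paper's form of the factor saturates at $\frac{1}{2}$; you deliberately weakened your bound to match the stated lemma. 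Two remarks on your caveats. First, the $\alpha$-dependence of the inverse-inequality constant that you flag as the main obstacle is a legitimate concern, but it applies verbatim to the paper's own proof, which uses the identical spectral fact at the identical spot; so it is a question about the lemma's claimed constants, not a gap in your argument relative to the paper's. Second, the floating-subdomain difficulty is moot here: the lemma concerns the two-subdomain case, where both subdomains touch the Dirichlet boundary, $K_i$ is s.p.d., and indeed the factor $1 - c\frac{h}{H\alpha}$ itself only holds for non-floating subdomains, so no pseudo-inverse reading of $K_i^{-1}$ is required.
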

\begin{proof}
  Recall the definition of $A_i$ from (\ref{eq:localsolvers}), and let
  us decompose $A_i$ into the mass matrix $M_i$ and the stiffness
  matrix $K_i$,
  \begin{equation*}
    A_i := \eta M_i + K_i,
  \end{equation*}
  where $\vB_i^\top M_i \uB_i := \int_{\Omega_i} u_i \, v_i$ and $K_i$
  is defined as $\vB_{i}^\top K_i^{} \uB_{i} := a_i^{}(u_i,v_i) -
  \eta \, \vB_i^\top M_i \uB_i$. Consider now
  \begin{equation*}
    \hat{A} := \MATT{K_i + \eta M_i}{A_{i\Gamma}}{A_{i \Gamma}^\top}{\half A_\Gamma},
  \end{equation*}
  which is coercive, i.e., for all $\wB := (\uB_i, \phiB)$ we have (see
  \cite[Equation 3.6]{hajian2014analysis}), 
  \begin{equation} \label{eq:dummyETA}
    \wB^\top \hat{A} \wB \geq c \, \Bnorm{(u_i,\varphi)}{i}^2 \geq \eta
    \, \uB_i^\top M_i^{} \uB_i^{} + \frac{c}{H } \norm{ \varphi
    }_\Gamma^2,
  \end{equation}
  where the last inequality is Lemma \ref{lemma:HDGlowerforB}.
  On the other hand we can easily verify that for $u_i :=
  \Hopt_i(\varphi)$ we have
  \begin{equation} \label{eq:BtoM}
    \phiB^\top B_i \phiB = \uB_i^\top \big( K_i + \eta M_i \big) \uB_i
    \leq \big( C h^{-2} + \eta \big) \uB_i^\top M_i \uB_i,
  \end{equation}
  where we have used the fact that $ \sigma(M_i^{-1} K_i^{} ) \in [c_1, c_2
    \,h^{-2}]$, which is usual for elliptic operators, see for instance
  \cite[Theorem 3.4]{castillo}. For $\wB := (\Hopt_i(\varphi),\varphi)$,
  observing that
  \begin{equation*}
    \frac{1}{2} \phiB^\top_{} A_\Gamma^{} \phiB_{}^{} - 
    \phiB^\top_{} B_i \phiB_{}^{} = \wB_{}^\top \hat{A} \wB,
  \end{equation*}
  and using (\ref{eq:dummyETA}) we have
  \begin{equation*}
    \frac{1}{2} \phiB^\top_{} A_\Gamma^{} \phiB_{}^{} - 
    \phiB^\top_{} B_i \phiB_{}^{} \geq
    \frac{\eta}{C h^{-2} + \eta} \phiB^\top_{} B_i \phiB_{}^{}
    +  \frac{c}{H} \norm{ \varphi }_\Gamma^2.
  \end{equation*}
  Recalling that $\frac{1}{2} \phiB^\top_{} A_\Gamma^{} \phiB_{}^{} =
  \mu \norm{\varphi}_\Gamma^2$ we can conclude 
  \begin{equation} \label{eq:BoptETA}
    \bigg( \frac{ 1 }{ 1 + \frac{C \eta \,  h^2}{1 + C \eta \, h^2} } \bigg)
    \Big( 1 - {c} \frac{h}{H \alpha} \Big) \mu \norm{\varphi}_\Gamma^2
    \geq \phiB^\top_{} B_i \phiB_{}^{}.
  \end{equation}
  This completes the proof.
\end{proof}

We can use Lemma \ref{lem:refinedB} to obtain a sharper contraction
factor for the two subdomain case with respect to $\eta$. In the
following corollary, we {study} the effect of $\eta$ on
the contraction factor. We consider only the case when $\gamma = 1$
for clarity of the presentation. However it is possible to use a
combination of $\gamma$ and the time-step $\tau = \eta^{-1}$ to
optimize the contraction factor. Observe that in the following
corollary, if $\eta = O(h^{-2})$ then the contraction factor is
independent of the mesh-size.
\begin{theorem}\label{cor:refinedTWO}
  Consider the two-subdomain case and let $\gamma =
  1$. {Then the error of the interface variable satisfies
    the contraction estimate}
  \begin{equation*} \normalfont
    \norm{ \mathbf{e}_i^{(n)} }_{E_i^\top E_i^{}} \leq
    \rho(h,H,\eta) \, \norm{ \mathbf{e}_i^{(n-1)} }_{E_i^\top E_i^{}}
    \quad \text{for } i = 1,2,
  \end{equation*}
  where $E_i := A_{\Gamma}^{1/2} (1- A_\Gamma^{-1} B_i^{} )$ and 
  \begin{equation*} \normalfont
    \rho(h,H,\eta) := 
    \begin{cases}
      1 - c \frac{h}{H \alpha} & \text{for } \eta = O(1) \text{ and }
      O(h^{-1}), \\ 1 - C & \text{for } \eta = O(h^{-2}).
    \end{cases}
  \end{equation*}
\end{theorem}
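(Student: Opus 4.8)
The plan is to turn Algorithm \ref{algo:multi} with $\gamma=1$ into a linear fixed-point iteration purely on the interface and to read off the contraction factor from the spectrum of $A_\Gamma^{-1}B_i$, which Lemma \ref{lem:refinedB} already controls in terms of $\eta$. First I would specialize the simplified interface recursion derived in the proof of Theorem \ref{thm:main}, namely $\Ropt_i(\varphi_i^{(n)}) = (2\gamma-1)\varphi_j^{(n-1)} - \Ropt_j(\varphi_j^{(n-1)})$, to $\gamma=1$, so that on the single interface $\Gamma$ the error satisfies $\Ropt_i(\mathbf{e}_i^{(n)}) = \mathbf{e}_j^{(n-1)} - \Ropt_j(\mathbf{e}_j^{(n-1)})$. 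Using Definition \ref{HarmExtDef}, the identity $\half\phiB^\top A_\Gamma \phiB = \mu\norm{\varphi}_\Gamma^2$ (so that the interface mass matrix equals $\tfrac{1}{2\mu}A_\Gamma$), and $\Hopt_i(\varphi)=-A_i^{-1}A_{i\Gamma}\phiB$, the operator $\Ropt_i$ is represented in the nodal basis by $R_i := I - A_\Gamma^{-1}B_i$ and the right-hand side operator $\mathrm{id}-\Ropt_j$ by $I-R_j = A_\Gamma^{-1}B_j$. The error therefore obeys the matrix recursion $R_i\mathbf{e}_i^{(n)} = A_\Gamma^{-1}B_j\mathbf{e}_j^{(n-1)}$, and since $E_i = A_\Gamma^{1/2}R_i$ this is exactly $E_i\mathbf{e}_i^{(n)} = A_\Gamma^{-1/2}B_j\mathbf{e}_j^{(n-1)}$.

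The core of the proof is then a one-dimensional estimate. I introduce the symmetric positive semidefinite matrix $P_j := A_\Gamma^{-1/2}B_j A_\Gamma^{-1/2}$ and the vector $w := A_\Gamma^{1/2}\mathbf{e}_j^{(n-1)}$; a direct computation gives $E_i\mathbf{e}_i^{(n)} = P_j w$ and $E_j\mathbf{e}_j^{(n-1)} = (I-P_j)w$, whence $\norm{\mathbf{e}_i^{(n)}}_{E_i^\top E_i}^2 = w^\top P_j^2 w$ and $\norm{\mathbf{e}_j^{(n-1)}}_{E_j^\top E_j}^2 = w^\top(I-P_j)^2 w$. Diagonalizing $P_j$ and denoting a generic eigenvalue by $\sigma\in[0,\beta_j]$ with $\beta_j:=\lambda_{\max}(P_j)$, I bound the ratio eigenvalue by eigenvalue: since $\sigma\mapsto \sigma/(1-\sigma)$ is increasing on $[0,1)$, one gets $w^\top P_j^2 w \le \frac{\beta_j^2}{(1-\beta_j)^2}\,w^\top(I-P_j)^2 w$, i.e.
\[
  \norm{\mathbf{e}_i^{(n)}}_{E_i^\top E_i} \le \frac{\beta_j}{1-\beta_j}\,\norm{\mathbf{e}_j^{(n-1)}}_{E_j^\top E_j}.
\]
Since Lemma \ref{lem:refinedB} bounds $\beta_1$ and $\beta_2$ by the same quantity, the factor $\rho=\beta/(1-\beta)$ is common to both subdomains; this is how the per-$i$ statement is to be read, the block-Jacobi sweep mapping the error on one subdomain to its neighbour.

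It remains to make $\beta$ explicit in $\eta$. By definition $\beta=\lambda_{\max}(P_j)$ is the smallest constant with $\phiB^\top B_j\phiB\le\beta\,\phiB^\top A_\Gamma\phiB$, so Lemma \ref{lem:refinedB} together with $\half\phiB^\top A_\Gamma\phiB=\mu\norm{\varphi}_\Gamma^2$ yields $\beta\le\half\Theta$ with $\Theta:=\big(1+\tfrac{C\eta h^2}{1+C\eta h^2}\big)^{-1}\big(1-c\tfrac{h}{H\alpha}\big)$, hence $\rho\le \Theta/(2-\Theta)$. For $\eta=O(1)$ or $\eta=O(h^{-1})$ the factor $C\eta h^2\to 0$, so $\Theta = 1-c\tfrac{h}{H\alpha}+o(1)$ and $\rho\le\frac{1-\delta}{1+\delta}=1-O\big(\tfrac{h}{H\alpha}\big)$ with $\delta=c\tfrac{h}{H\alpha}$; for $\eta=O(h^{-2})$ the factor $C\eta h^2$ stays bounded away from $0$, so $\Theta$, and hence $\beta<\half$, are bounded away from $\half$ uniformly in $h$, giving $\rho\le 1-C$. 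Finally I would note that $\beta\le\half\Theta<\half$ forces $\sigma(A_\Gamma^{-1}B_i)\subset[0,1)$, so $R_i$ is invertible and $\norm{\cdot}_{E_i^\top E_i}$ is a genuine norm, consistent with the injectivity of $\Ropt_i$ in Lemma \ref{lem:injective}. The main obstacle is the bookkeeping of the first step — converting the weak recursion into the clean identity $R_i\mathbf{e}_i^{(n)}=A_\Gamma^{-1}B_j\mathbf{e}_j^{(n-1)}$ through the mass-matrix identification $\tfrac{1}{2\mu}A_\Gamma$ — after which the spectral estimate $\rho=\beta/(1-\beta)$ and the case analysis in $\eta$ are routine consequences of Lemma \ref{lem:refinedB}.
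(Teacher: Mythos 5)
Your proposal is correct, but it is genuinely more self-contained than the paper's proof, which is essentially a citation: the paper invokes the two-subdomain analysis of \cite[Section 4.1]{hajian2014analysis}, which already supplies the contraction estimate in the $E_i^\top E_i$-norms with
$\rho$ equal to the square of the upper-bound constant of (\ref{eq:BoptETA}) divided by $\mu$, i.e.\ $\rho=\Theta^2$ with
$\Theta:=\bigl(1+\tfrac{C\eta h^2}{1+C\eta h^2}\bigr)^{-1}\bigl(1-c\tfrac{h}{H\alpha}\bigr)$,
and then merely evaluates the three regimes of $\eta$. You instead reconstruct the mechanics from scratch: the identification $R_i=I-A_\Gamma^{-1}B_i$ and $\mathrm{id}-\Ropt_j\leftrightarrow A_\Gamma^{-1}B_j$ is exactly right (it follows from $A_\Gamma=2\mu M_\Gamma$ and $\BO{z}_i=M_\Gamma^{-1}B_i\phiB_i$, both used in the proof of Lemma \ref{lem:RtoB}), the recursion $(A_\Gamma-B_i)\mathbf{e}_i^{(n)}=B_j\mathbf{e}_j^{(n-1)}$ is the correct error equation for the block Jacobi sweep, and the symmetrization via $P_j=A_\Gamma^{-1/2}B_jA_\Gamma^{-1/2}$ with the eigenvalue-by-eigenvalue bound through the monotone map $\sigma\mapsto\sigma/(1-\sigma)$ is a clean way to extract the factor $\beta/(1-\beta)$, with $\beta\le\Theta/2$ by Lemma \ref{lem:refinedB}. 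Your constant $\Theta/(2-\Theta)$ is slightly weaker than the paper's quoted $\Theta^2$ (one checks $\Theta^2\le\Theta/(2-\Theta)$ since $\Theta(2-\Theta)\le 1$), but both reduce to $1-c\,h/(H\alpha)$ for $\eta=O(1),O(h^{-1})$ and to $1-C$ for $\eta=O(h^{-2})$, so the theorem as stated follows either way. Two further points in your favor: what you actually prove is the cross-index estimate $\norm{\mathbf{e}_i^{(n)}}_{E_i^\top E_i}\le\rho\,\norm{\mathbf{e}_j^{(n-1)}}_{E_j^\top E_j}$, $j\ne i$, and you correctly flag that the theorem's same-index wording must be read this way (equivalently, the maximum over $i$ contracts per sweep, or each subdomain contracts by $\rho^2$ per double sweep) --- this is an imprecision inherited from the citation, not a defect of your argument; and your observation that $\beta<\tfrac12$ makes $A_\Gamma-B_i$ invertible supplies the well-posedness that the paper's citation leaves implicit. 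What the paper's route buys is brevity and the sharper constant from the earlier work; what yours buys is a complete, verifiable derivation inside this paper.
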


\begin{proof}
  The proof relies on the proof given in \cite[Section
    4.1]{hajian2014analysis}. In the case of the two-subdomain case
  with $\gamma = 1$ we have from \cite[Section
    4.1]{hajian2014analysis} that
  \begin{equation*} \normalfont
    \norm{ \mathbf{e}_i^{(n)} }_{E_i^\top E_i^{}} \leq
    \rho(h,H,\eta) \, \norm{ \mathbf{e}_i^{(n-1)} }_{E_i^\top E_i^{}}
    \quad \text{for } i = 1,2,
  \end{equation*}
  where 
  \begin{equation*}
    \rho(h,H,\eta) := \left[ \bigg( \frac{ 1 }{ 1 + \frac{C \eta \,
        h^2}{1 + C \eta \, h^2} } \bigg) \Big( 1 - {c} \frac{h}{H
      \alpha} \Big) \right]^2,
  \end{equation*}
  which is the {square} of the upper bound constant in
  (\ref{eq:BoptETA}) divided by $\mu$.  Choosing $\eta = O(1),
  O(h^{-1})$ and $O(h^{-2})$ completes the proof. In particular,
  observe that for $\eta = O(h^{-2})$ we have
  \begin{equation*}
    \bigg( \frac{ 1 }{ 1 + \frac{C \eta \,
        h^2}{1 + C \eta \, h^2} } \bigg) \Big( 1 - {c} \frac{h}{H
      \alpha} \Big)
    \leq 
    \Big( \frac{1 - C_2 \, h}{1 + C_3} \Big) 
    \leq
    \Big( \frac{1}{1 + C_3} \Big).
  \end{equation*}
  This shows that with a time-step of the size of a forward Euler
  method, the algorithm converges in a {\it fixed number} of iterations
  since $\frac{1}{1+C_3} < 1$ uniformly in $h$.
\end{proof}

Let us now extend the above result to the case of many non-floating
subdomains. In order to do so, we first need the following lemma that
relates {the} $\Ropt_i(\cdot)$ operator to $B_i$.
\begin{lemma} \label{lem:RtoB}
  Let $B_i := A_{i\Gamma}^\top A_{i}^{-1} A_{i\Gamma}^{}$ and
  $\mathcal{R}_i(\varphi_i) := \gamma \varphi_i - \frac{1}{2\mu}
  \Big(\mu - \PDif{}{\NOR_i} \Big) \Hopt_i^{} (\varphi_i^{})$ for
  $\Omega_i$ which is a non-floating subdomain. Then the following
  estimate holds
  \begin{equation} \label{eq:RtoB}
    c \, \mu \, \norm{\Ropt_i(\varphi_i)}_{\Gamma_i}^2
    \leq \phiB_i^\top B_i^{} \phiB_i^{},
  \end{equation}
  where $c$ is independent of $h, \alpha$ and $\eta$. Moreover let
  $u_i^{} = \Hopt_i^{} (\varphi_i^{})$, then we have
  \begin{equation} \label{eq:RtoU}
    \Big(\frac{\eta}{\eta+C \, h^{-2}} \Big)
    \cdot c\, \mu \,
    \norm{\Ropt_i(\varphi_i)}_{\Gamma_i}^2
    \leq \eta \norm{u_i}_{\OM_i}^2.
  \end{equation}
\end{lemma}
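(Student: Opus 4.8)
The plan is to prove \eqref{eq:RtoB} first and then read off \eqref{eq:RtoU} as an immediate consequence. Throughout I would write $u_i = \Hopt_i(\varphi_i)$ and abbreviate $q_i := \frac{1}{2\mu}\big(\mu u_i - \PDif{u_i}{\NOR_i}\big)$ on $\Gamma_i$, so that $\Ropt_i(\varphi_i) = \gamma\varphi_i - q_i$. The two identities I would record at the outset are $\phiB_i^\top B_i\phiB_i = a_i(u_i,u_i)$, which is immediate from $A_i\uB_i = -A_{i\Gamma}\phiB_i$ in Definition \ref{HarmExtDef}, together with
\[
  \int_{\Gamma_i} q_i\,\varphi_i = -\frac{1}{2\mu}\,a_{i\Gamma}(u_i,\varphi_i) = \frac{1}{2\mu}\,a_i(u_i,u_i) = \frac{1}{2\mu}\,\phiB_i^\top B_i\phiB_i,
\]
where I used the harmonic-extension relation $a_i(u_i,u_i)+a_{i\Gamma}(u_i,\varphi_i)=0$ exactly as in the proof of Lemma \ref{lem:injective}.

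The structural observation driving the argument is that the linear map $\Lambda_i\ni\varphi_i\mapsto q_i\in\Lambda_i$ is self-adjoint and positive with respect to the $\Lsp^2(\Gamma_i)$ inner product: in coordinates it is represented by $T := \frac{1}{2\mu}M_{\Gamma_i}^{-1}B_i$, and $B_i=A_{i\Gamma}^\top A_i^{-1}A_{i\Gamma}$ is symmetric positive. I would diagonalize $T$ in the $M_{\Gamma_i}$-inner product and reduce the desired bound to a scalar inequality mode by mode. Indeed $\norm{\Ropt_i(\varphi_i)}_{\Gamma_i}^2 = \langle(\gamma-T)^2\varphi_i,\varphi_i\rangle_{M_{\Gamma_i}}$ and $\phiB_i^\top B_i\phiB_i = 2\mu\langle T\varphi_i,\varphi_i\rangle_{M_{\Gamma_i}}$, so \eqref{eq:RtoB} is equivalent to requiring $c(\gamma-t)^2\le 2t$ for every eigenvalue $t$ of $T$. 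The upper bound $t<\frac12$ is already contained in Lemma \ref{lem:refinedB} (take $\eta=0$), which keeps the spectrum of $T$ in $(0,\frac12)\subset(0,\gamma)$ and thus away from the singular point $t=\gamma$; since $\gamma\le 1$ gives $(\gamma-t)^2\le\gamma^2\le 1$ there, and $t\mapsto 2t/(\gamma-t)^2$ is increasing on $(0,\gamma)$, it suffices to take $c = 2t_{\min}$ with $t_{\min}$ the smallest eigenvalue of $T$.

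The main obstacle is therefore the uniform positive lower bound $t_{\min}\ge c_0>0$, equivalently $\phiB_i^\top B_i\phiB_i\ge c_0\,\mu\,\norm{\varphi_i}_{\Gamma_i}^2$ with $c_0$ independent of $h,\alpha,\eta$. This is precisely where the non-floating hypothesis is needed: for a subdomain meeting the Dirichlet boundary, the Nitsche penalty makes $a_i(\cdot,\cdot)$ dominate the boundary mass $\mu\norm{\cdot}_{\partial\OM_i}^2$, so that the harmonic-extension energy $a_i(u_i,u_i)$ cannot fall below a fixed fraction of $\mu\norm{\varphi_i}_{\Gamma_i}^2$ (the bulk gradient contribution, of order $H^{-1}\norm{\varphi_i}_{\Gamma_i}^2$, is subdominant). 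I would invoke the eigenvalue estimate of \cite[Lemma 3.7]{hajian2014analysis} to make this lower bound rigorous, yielding $c = 2t_{\min}$ of the required order.

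Finally, \eqref{eq:RtoU} follows with no further work. Inequality \eqref{eq:BtoM} in the proof of Lemma \ref{lem:refinedB} gives $\phiB_i^\top B_i\phiB_i = \uB_i^\top(K_i+\eta M_i)\uB_i \le (Ch^{-2}+\eta)\,\uB_i^\top M_i\uB_i = (Ch^{-2}+\eta)\norm{u_i}_{\OM_i}^2$. Multiplying by $\frac{\eta}{\eta+Ch^{-2}}$ yields $\frac{\eta}{\eta+Ch^{-2}}\phiB_i^\top B_i\phiB_i \le \eta\norm{u_i}_{\OM_i}^2$, and chaining this with \eqref{eq:RtoB} produces $\big(\frac{\eta}{\eta+Ch^{-2}}\big)\,c\,\mu\,\norm{\Ropt_i(\varphi_i)}_{\Gamma_i}^2 \le \eta\norm{u_i}_{\OM_i}^2$, which is exactly \eqref{eq:RtoU}.
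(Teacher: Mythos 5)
Your proof is correct, and its second half --- deducing \eqref{eq:RtoU} from \eqref{eq:RtoB} by multiplying \eqref{eq:BtoM} by $\eta/(\eta+C h^{-2})$ and chaining --- coincides exactly with the paper's. For \eqref{eq:RtoB} you and the paper use the same imported ingredients but assemble them differently. The paper splits $\Ropt_i(\varphi_i) = \gamma\varphi_i - \frac{1}{2\mu}z_i$ with $z_i = (\mu - \PDif{}{\NOR_i})\Hopt_i(\varphi_i)$, applies the triangle and Young inequalities, bounds $\norm{z_i}_{\Gamma_i}^2 \leq 2\mu\,\phiB_i^\top B_i\phiB_i$ via the similarity $\sigma(B_i^{1/2}M_{\Gamma_i}^{-1}B_i^{1/2}) = \sigma(M_{\Gamma_i}^{-1}B_i)$ together with the upper spectral bound for $A_{\Gamma_i}^{-1}B_i$, and then absorbs the remaining $\norm{\varphi_i}_{\Gamma_i}^2$ term using the lower spectral bound $c_B$ of \cite[Equation 3.1]{hajian2014analysis}. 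You instead write $\Ropt_i(\varphi_i) = (\gamma - T)\varphi_i$ with $T = \frac{1}{2\mu}M_{\Gamma_i}^{-1}B_i = A_{\Gamma_i}^{-1}B_i$, diagonalize $T$ in the $M_{\Gamma_i}$-inner product, and reduce everything to the scalar inequality $c(\gamma-t)^2 \leq 2t$ on $\sigma(T) \subset [c_0, \tfrac12)$. Your route buys an exact reduction with no Young-inequality loss and a transparent optimal constant ($c = 2t_{\min}$), making clear that the only obstruction is the spectrum of $T$ approaching zero, which is exactly where the non-floating hypothesis enters in both arguments; the paper's route avoids diagonalization and needs only one-sided inequalities applied term by term. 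Two small repairs: the coordinate representation of $\varphi_i \mapsto q_i$ by $T$ is precisely \cite[Proposition 2.4]{hajian2014analysis} and should be cited (or derived by polarizing your identity for $\int_{\Gamma_i} q_i\,\varphi_i$, which only gives the quadratic form on the diagonal); and the bound $t < \tfrac12$ is more cleanly quoted from \cite[Equation 3.1]{hajian2014analysis}, as the paper does, than from Lemma \ref{lem:refinedB} ``with $\eta=0$'' --- what you actually use is that the $\eta$-dependent prefactor in that lemma is $\leq 1$ for every $\eta \geq 0$, so its bound holds uniformly (there is no circularity either way, since Lemma \ref{lem:refinedB} precedes Lemma \ref{lem:RtoB}).
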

\begin{proof}
  We take the $\Lsp^2$-norm of $\Ropt_i(\varphi_i)$ and use the
  triangle and Young's inequality to obtain
  \begin{equation*}
    \norm{\Ropt_i(\varphi_i)}_{\Gamma_i}^2 \leq 2 \gamma^2
    \norm{\varphi_i}_{\Gamma_i}^2 + \frac{1}{2 \mu^2}
    \norm{z_i}_{\Gamma_i}^2,
  \end{equation*}
  where $z_i := (\mu - \partial_{\NOR_i}) u_i \in \Lambda_i$. We know
  from \cite[Proposition 2.4]{hajian2014analysis} that $\BO{z}_i^{} =
  M_{\Gamma_i}^{-1} B_i^{} \phiB_i^{}$. Then we have
  \begin{equation*}
    \norm{z_i}_{\Gamma_i}^2 = \phiB_i^\top B_i M_{\Gamma_i}^{-1}
    M_{\Gamma_i}^{} M_{\Gamma_i}^{-1} B_i \phiB = \phiB_i^\top B_i
    M_{\Gamma_i}^{-1} B_i \phiB_i = \phiB_i^{\top}
    B_i^{1/2} (B_i^{1/2} M_{\Gamma_i}^{-1} B_i^{1/2}) B_i^{1/2} \phiB_i^{},
  \end{equation*}
  since $B_i$ is s.p.d. A simple calculation shows that
  $\sigma(B_i^{1/2} M_{\Gamma_i}^{-1} B_i^{1/2}) =
  \sigma(M_{\Gamma_i}^{-1} B_i^{})$. Recall that $A_{\Gamma_i} = 2 \mu
  M_{\Gamma_i}$. Then for $z_i$ we have from the eigenvalues of
  $A_{\Gamma_i}^{-1}B_i^{}$, see \cite[Equation
    3.1]{hajian2014analysis},
  \begin{equation*}
    \norm{z_i}_{\Gamma_i}^2 \leq {2\mu} \cdot
    \sigma_{\max}(A_{\Gamma_i}^{-1} B_i^{}) \cdot \phiB_i^\top B_i^{}
    \phiB_i^{} \leq 2 \mu \cdot \phiB_i^\top B_i^{} \phiB_i^{}.
  \end{equation*}
  This yields
  \begin{equation*}
    \norm{\Ropt_i(\varphi_i)}_{\Gamma_i}^2 \leq 2 \gamma^2
    \norm{\varphi_i}_{\Gamma_i}^2 + \mu^{-1} \phiB_i^\top B_i^{}
    \phiB_i^{} \leq 2 \norm{\varphi_i}_{\Gamma_i}^2 + \mu^{-1}
    \phiB_i^\top B_i^{} \phiB_i^{},
  \end{equation*}
  since $\gamma\leq 1$. The last step is to use $
  \norm{\varphi_i}_{\Gamma_i}^2 \leq c_B^{-1} \mu^{-1} \phiB_i^\top
  B_i^{} \phiB_i^{}$, i.e., the lower bound for the eigenvalues of
  $A_{\Gamma_i}^{-1} B_i^{}$, see \cite[Equation
    3.1]{hajian2014analysis} where $c_B$ is independent of
  $\eta$. Hence we proved (\ref{eq:RtoB}). Using (\ref{eq:BtoM}), we
  obtain for $u_i = \Hopt_i(\varphi_i)$
  \begin{equation*}
    \Big(\frac{\eta}{\eta+C \, h^{-2}} \Big) \cdot c\, \mu \,
    \norm{\Ropt_i(\varphi_i)}_{\Gamma_i}^2 \leq \eta
    \norm{u_i}_{\OM_i}^2.
  \end{equation*}
  This completes the proof.
\end{proof}

Lemma \ref{lem:RtoB} enables us to prove the following theorem. Note
that similar to the two-subdomain case, one can obtain a contraction
factor independent of $h$ by choosing $\eta = O(h^{-2})$.
\begin{theorem} \label{thm:nonfloatRefined}
  Suppose the number of subdomains {is} fixed and they
  consist of only non-floating subdomains. Moreover let $\gamma=1$ and
  $\eta \geq 0$, then OSM converges, {and we have the
    refined contraction estimate}
  \begin{equation*}
    \norm{\Ropt(\varphi^{(n)})}^2 \leq \Big( 1 - C_1
    \frac{\eta}{\eta+C_2 h^{-2}} - c \frac{h}{H \alpha} \Big)
    \norm{\Ropt(\varphi^{(n-1)})}^2,
  \end{equation*}
  where $ \norm{\Ropt(\varphi)}^2 := \sum_{i=1}^{N_s} \norm{
    \Ropt_i(\varphi_i)}_{\Gamma_i}^2$.
\end{theorem}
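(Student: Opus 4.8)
The plan is to re-run the energy computation from the proof of Theorem~\ref{thm:main} with $\gamma=1$ and then extract from the right-hand side an extra, $\eta$-dependent, contraction contribution that was deliberately discarded there. Concretely, I would start from the last inequality in (\ref{eq:proof}) with $2\gamma-1=1$, which gives the exact recursion
\begin{equation*}
  \norm{ \Ropt(\varphi^{(n)}) }^2 \leq \norm{ \Ropt(\varphi^{(n-1)}) }^2
  - c\,\mu^{-1} \sum_{m=1}^{N_s} \norm{ (u_m,\varphi_m) }_m^2,
\end{equation*}
where $u_m = \Hopt_m(\varphi_m)$ is evaluated at iteration $(n-1)$. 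The whole task is then to bound the subtracted sum below by $\big( C_1 \frac{\eta}{\eta+C_2 h^{-2}} + c\frac{h}{H\alpha}\big)\norm{\Ropt(\varphi^{(n-1)})}^2$. Since both $\norm{\Ropt(\cdot)}^2$ and the local energies are additive over subdomains, this reduces to a single subdomain-wise estimate for each (non-floating) $\OM_m$.

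For each $m$ I would use two complementary lower bounds on the same local energy, combined by splitting it into halves, $\norm{(u_m,\varphi_m)}_m^2 = \tfrac12\norm{(u_m,\varphi_m)}_m^2 + \tfrac12\norm{(u_m,\varphi_m)}_m^2$. For the first half I keep only the mass term, $\norm{(u_m,\varphi_m)}_m^2 \geq \eta \norm{u_m}_{\OM_m}^2$ — precisely the contribution to (\ref{eq:lemmaNONFLOATING}) that was dropped in Theorem~\ref{thm:main} — and then invoke estimate (\ref{eq:RtoU}) of Lemma~\ref{lem:RtoB} to obtain
\begin{equation*}
  \mu^{-1} \eta \norm{u_m}_{\OM_m}^2 \geq
  c\,\frac{\eta}{\eta+C h^{-2}}\, \norm{\Ropt_m(\varphi_m)}_{\Gamma_m}^2 .
\end{equation*}
For the second half I use the norm equivalence (\ref{eq:equiv}) of Theorem~\ref{thm:main} with $C(H,\eta)=H$ (equivalently, (\ref{eq:RtoB}) together with the eigenvalue bounds of $A_{\Gamma_m}^{-1}B_m$), which for a non-floating subdomain yields $\mu^{-1}\norm{(u_m,\varphi_m)}_m^2 \geq c\,\frac{h}{H\alpha}\norm{\Ropt_m(\varphi_m)}_{\Gamma_m}^2$, exactly the $\gamma=1$ rate already isolated in (\ref{eq:IPHASM}). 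Adding the two halves gives the desired per-subdomain bound. Summing over $m=1,\dots,N_s$ and recalling $\sum_m \norm{\Ropt_m(\varphi_m)}_{\Gamma_m}^2 = \norm{\Ropt(\varphi)}^2$ then produces
\begin{equation*}
  \norm{ \Ropt(\varphi^{(n)}) }^2 \leq
  \Big( 1 - C_1 \frac{\eta}{\eta+C_2 h^{-2}} - c\frac{h}{H\alpha} \Big)
  \norm{ \Ropt(\varphi^{(n-1)}) }^2,
\end{equation*}
which is the claim; convergence is immediate since the bracket is strictly less than $1$ for $\eta\geq 0$. As a sanity check on the two regimes: when $\eta\to 0$ the first term vanishes and we recover the $\gamma=1$ rate $1-c\frac{h}{H\alpha}$, while for $\eta=O(h^{-2})$ the spectral ratio $\frac{\eta}{\eta+C_2 h^{-2}}$ is bounded below by a constant, so the factor is $h$-independent, consistent with the two-subdomain Theorem~\ref{cor:refinedTWO}.

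The main obstacle is the first half, i.e.\ producing the $\eta$-dependent contribution measured in the natural norm $\norm{\Ropt_m(\cdot)}_{\Gamma_m}$: the coercivity inequality only controls the local energy $\norm{(u_m,\varphi_m)}_m$, and its mass part $\eta\norm{u_m}_{\OM_m}^2$ must be converted back into $\norm{\Ropt_m(\varphi_m)}_{\Gamma_m}^2$. This is exactly where the spectral gap between $M_m$ and $K_m$ enters, $\sigma(M_m^{-1}K_m)\subset[c_1,c_2 h^{-2}]$, which manufactures the factor $\frac{\eta}{\eta+C h^{-2}}$; Lemma~\ref{lem:RtoB} packages this step, so the residual work is bookkeeping of constants and checking that restricting to non-floating subdomains is precisely what keeps the $\eta$-independent term $c\frac{h}{H\alpha}$ genuinely available (cf.\ (\ref{eq:lemmaNONFLOATING}) versus (\ref{eq:lemmaFLOATING})), so that both halves are simultaneously strictly positive.
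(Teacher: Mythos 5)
Your proposal is correct and takes essentially the same approach as the paper: both start from the last inequality in (\ref{eq:proof}) with $\gamma=1$, use (\ref{eq:RtoU}) of Lemma \ref{lem:RtoB} to convert the mass term $\eta\norm{u_m}_{\OM_m}^2$ into the $\eta$-dependent contraction contribution, and use the non-floating estimate (\ref{eq:lemmaNONFLOATING}) (equivalently (\ref{eq:equiv}) with $C(H,\eta)=H$) to retain the $c\,h/(H\alpha)$ term. The only difference is bookkeeping: you split the local energy into two halves and drop complementary parts in each, whereas (\ref{eq:lemmaNONFLOATING}) already carries the mass term and the remaining seminorm part separately, so the paper inserts (\ref{eq:RtoU}) directly into it.
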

\begin{proof}
We consider the case when $\gamma=1$. Inserting (\ref{eq:RtoU}) into
(\ref{eq:lemmaNONFLOATING}) and then into (\ref{eq:proof}) gives
\begin{equation*}
  \norm{\Ropt(\varphi^{(n)})}^2 \leq \Big( 1 - C_1 \frac{\eta}{\eta+C_2 h^{-2}}
  - c \frac{h}{H \alpha} \Big) \norm{\Ropt(\varphi^{(n-1)})}^2.
\end{equation*}
Note that for $\eta = O(h^{-2})$ the above estimate provides a
contraction factor independent of the mesh parameter.  This completes
the proof.
\end{proof}
}


\section{Numerical experiments} \label{sec:num}
We now illustrate our theoretical results by performing some numerical
experiments for the model problem
\begin{equation}
  \begin{array}{rcll}
    (\eta-\Delta) u &=& f,\quad & \textrm{in $\OM$},\\
    u &=& 0, & \textrm{on $\partial \OM$},
  \end{array}
\end{equation}
where $\OM$ is either the unit square, i.e.~$\Omega=(0,1)^2$, or
the domain presented in Figure \ref{fig:ddmesh}. The interface is such
that it does not cut through any element, therefore $\GAM \subset
\EPS$. We use $\PO^k$ elements and $\alpha = c {(k+1)(k+2)}$ where
$c>0$ is a constant independent of $h$ and $k$. We choose also a
randomized initial guess for Algorithm \ref{algo:multi}.

\subsection{Dependence on the mesh size}

In \cite[Section 6.3]{hajian2014analysis}, we have already
investigated numerically the convergence behavior of OSM for IPH for a
many subdomain configuration, and we show in Table \ref{tab:conv4sub}
that indeed for a unit square domain decomposed into 110 subdomains
(see Figure \ref{fig:ddmeshONE} left)
\begin{figure}
  \centering
  \epsfig{file=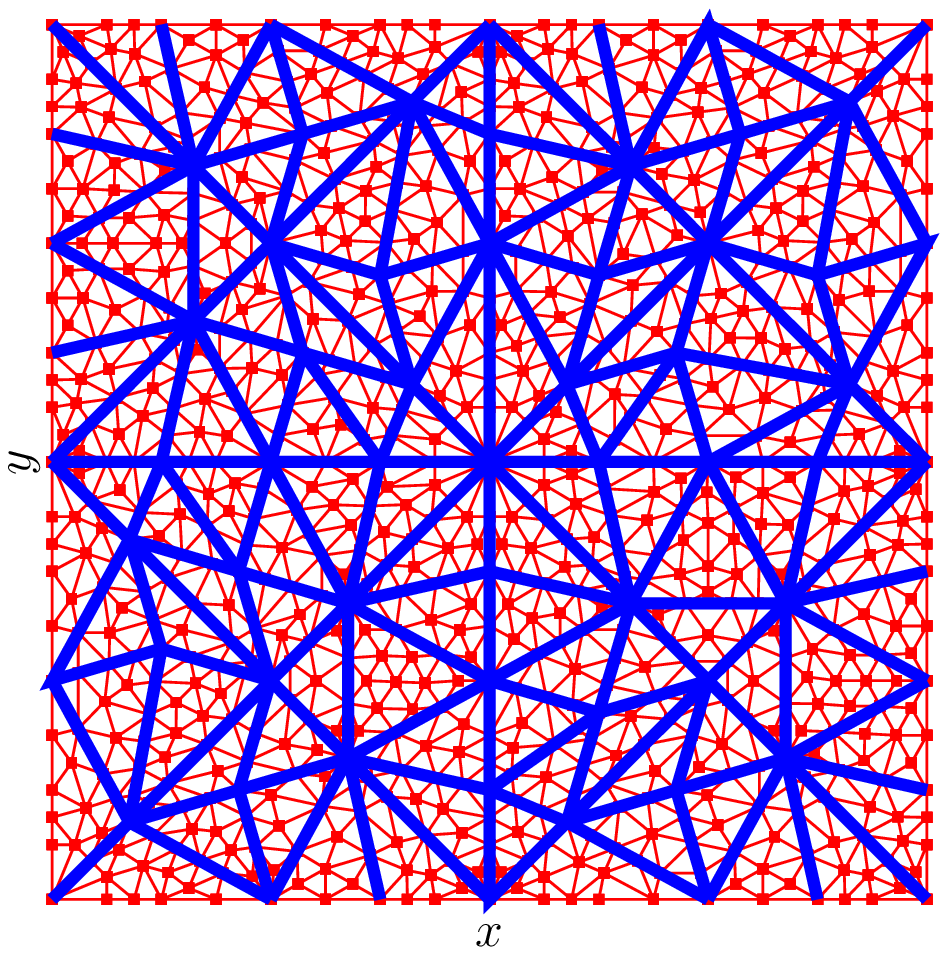, scale=0.5}
  \hspace{1cm}
  \epsfig{file=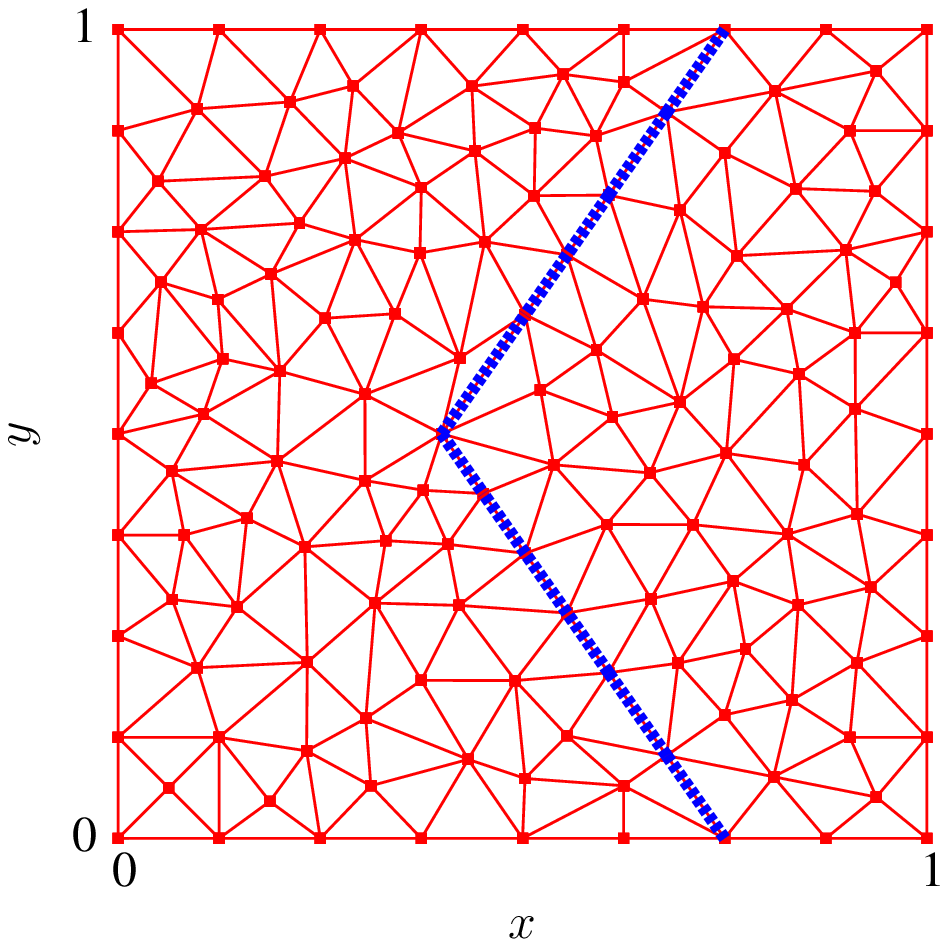, scale=0.52}
  \caption{An unstructured mesh with the interface $\Gamma$
    (blue-dashed).}
  \label{fig:ddmeshONE}
\end{figure}
the number of iterations grows like ${h}^{-1/2}$ when we refine the
mesh, provided that $\gamma = \frac{1}{2}(1 + \sqrt{h})$, as our new
theoretical analysis predicts.
\begin{table}
  \begin{tabular}{|l|cccc|}
    \hline
    Mesh size & $h_0$ & $h_0/2$ & $h_0/4$ & $h_0/8$ 
    \\
    \hline
    \# iterations & 1057 & 1297 & 1951 & 2734
    \\ \hline
  \end{tabular}
  \caption{Convergence of OSM for 110 subdomains ($h$-dependence).}
  \centering
  \label{tab:conv4sub}
\end{table}

\subsection{Dependence on the polynomial degree}
We next illustrate how the contraction factor of Algorithm
\ref{algo:multi} depends on the polynomial degree. First, we choose a
two subdomain configuration with a non-straight interface (see Figure
\ref{fig:ddmeshONE} right) for $\OM = (0,1)^2$. Then we choose $\gamma
= \frac{1}{2}(1+\frac{1}{k})$, $\eta = 1$ and run Algorithm
\ref{algo:multi}. We expect from our analysis to obtain $\rho \leq 1 -
O(\frac{1}{k})$, which is indeed observed in Figure 
\ref{fig:rhohk}.
%
%

Then we choose $\OM$ to be the domain in Figure \ref{fig:ddmesh} with
seven subdomains (including floating ones). We observe in Figure
\ref{fig:rhohk} that the number of iterations grows like
$O(k^{-1})$, which is expected from our analysis.
\begin{figure}
  \centering
  \epsfig{file=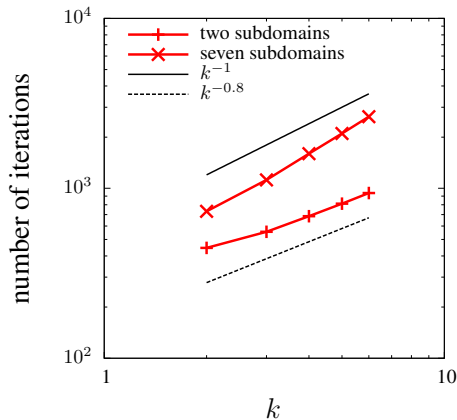, scale=0.75}
  \caption{Convergence of the OSM with respect to polynomial degree.}
  \label{fig:rhohk}
\end{figure}

\subsection{Effect of the time-step on convergence}

In Section \ref{sec:sharpest} we showed how the convergence of the two
subdomain algorithm is affected by the choice of $\eta$. In Table
\ref{tab:ETAeffect} we see the number of iterations required to reach
a given accuracy for different choices of $\eta$. The domain
decomposition setting is same as Figure \ref{fig:ddmeshONE} (right).
\begin{table}
  \centering
  \begin{tabular}{|l|cccc|}
    \hline
    & $h_0$ & $h_0/2$ & $h_0/4$ & $h_0/8$ 
    \\
    \hline
    case $\eta = O(1)$ & 103 & 214 & 405 & 820
    \\
    case $\eta = O(h^{-1})$ & 41 & 60 & 83 & 115
    \\ 
    case $\eta = O(h^{-2})$ & 16 & 16 & 15 & 14  
    \\ \hline
  \end{tabular}
  \caption{Convergence of Algorithm \ref{algo:multi} with $\gamma=1$ and
    different choices of $\eta$.}
  \label{tab:ETAeffect}
\end{table}
%


%
%
Observe that for $\eta = O(1)$, the number of iterations grows like
$O(h^{-1})$ while for $\eta = O(h^{-1})$ we observe $O(h^{-1/2})$ for
the growth of the number of iterations.  If we choose
$\eta=O(h^{-2})$, we obtain an optimal solver since the number of
iterations does not depend on the mesh parameter. 

In Table \ref{tab:compareETA} we compare the theoretical estimate in
(\ref{eq:BoptETA}) with the numerical experiments of Table
\ref{tab:ETAeffect}.
\begin{table}
  \centering
  \begin{tabular}{|l|ccc|}
    \hline
    & experiments & theoretical & 
    \\
    \hline
    case $\eta = O(1)$ & $1 - h$ & $1-h$ & (sharp)
    \\
    case $\eta = O(h^{-1})$ & $1 - \sqrt{h}$ & $1-h$ & (not sharp)
    \\
    case $\eta = O(h^{-2})$ & $1-c$ & $1-c$ & (sharp)
    \\ \hline
  \end{tabular}
  \caption{Comparison of contraction factors between the theoretical
    estimates of Section \ref{sec:sharpest} and the numerical
    experiments.}
  \label{tab:compareETA}
\end{table}
Note that the estimates of Section \ref{sec:sharpest} can capture the
optimality of the solver when $\eta = O(h^{-2})$. However it is not
sharp when $\eta = O(h^{-1})$.

We perform the same experiment with four subdomains on $\OM = (0,1)^2$
and we choose $\eta = O(h^{-2})$. We see in Table
\ref{tab:conv4subETA} that the number of iterations remains constant as
we refine the mesh.
\begin{table}
  \begin{tabular}{|l|cccc|}
    \hline
    Mesh size & $h_0$ & $h_0/2$ & $h_0/4$ & $h_0/8$ 
    \\
    \hline
    \# iterations & 144 & 157 & 168 & 164
    \\ \hline
  \end{tabular}
  \caption{Convergence of OSM for four subdomains with $\eta =
    O(h^{-2})$.} \centering
  \label{tab:conv4subETA}
\end{table}

{ Finally we perform numerical experiments on the weak
  scaling of the algorithm. According to Corollary \ref{cor:osm}, when
  $\tau=O(H^2)$ and the ratio $H/h$ is constant, i.e., we refine the
  mesh and the subdomain at the same time, one obtains a contraction
  factor independent of the mesh size. This can be achieved also using
  ASM applied to FEM. In Table \ref{tab:convScaling}, we illustrate
  the convergence of the OSM on a sequence of fine and coarse meshes
  such that the ratio $H/h$ remains constant.
\begin{table}
  \begin{tabular}{|l|cccc|}
    \hline
    Mesh size & $h_0$ & $h_0/2$ & $h_0/4$ & $h_0/8$ 
    \\
    \hline
    \# iterations & 105 & 95 & 99 & 104
    \\ \hline
  \end{tabular}
  \caption{Convergence of OSM while the ratio $H/h$ is constant with
    $\eta = O(H^{-2})$.} \centering
  \label{tab:convScaling}
\end{table}

}

\section{Conclusion} 

We designed and analyzed an optimized Schwarz method (OSM) for the
solution of elliptic problems discretized by hybridizable interior
penalty (IPH) discontinuous Galerkin methods. Our results are a
generalization of the two subdomain analysis in
\cite{hajian2014analysis} to the case of many subdomains, and we also
study theoretically for the first time the influence of the polynomial
degree of IPH discretizations, and the effect of the time-step on the
convergence of OSM when solving parabolic problems. We derived the
optimized parameter and corresponding contraction factor for various
asymptotic regimes of the mesh and subdomain size and the time-step,
and obtained scalability without a coarse space and also mesh
independent solvers in certain specific regimes.  We validated our
theoretical results by numerical experiments. \redd{The optimized
  contraction factor shows a clear advantage of OSM compared to the
  additive Schwarz method applied to {the} primal
  formulation, e.g., see the one-level ASM version of
  \cite{karakashian} or \cite{blanca}.} The next step is to design and
analyze a coarse correction for these OSM solvers applied to IPH in
the regimes where Algorithm \ref{algo:multi} is not scalable.

\appendix

\section{Proof of some estimates} \label{sec:proofofestimate}
We now prove several technical estimates we used in the analysis of
the OSM for IPH. For all subdomains when $\eta\geq0$ we have the
inequalites
\begin{align}
  \label{eq:A2}
  \mu \norm{ \varphi_i }_{\Gamma_i}^2 - a_i(v_i,v_i) &\geq
  c \, \norm{(v_i,\varphi_i)}_i^2, 
  & 
  \forall \varphi_i \in \Lambda_i, v_i = \Hopt_i(\varphi_i),
  \\
  \label{eq:A3}
  \big( 1 - c(h,H,k) \big) \mu \norm{\varphi_i}_{\Gamma_i}^2 &\geq
  a_i(v_i,v_i),
  & 
  \forall \varphi_i \in \Lambda_i, v_i = \Hopt_i(\varphi_i),
\end{align}
where
\begin{equation*}
    c(h,H,k) :=
    \left\{
    \begin{array}{ll}
      \frac{h}{H} \frac{1}{k^2} & \text{for non-floating subdomains},
      \\
      0 & \text{for floating subdomains}.
    \end{array}
    \right.
\end{equation*}
We also have for all subdomains when $\eta>0$ the estimate
\begin{equation} \label{eq:RoptEst}
  \norm{ \Ropt_i(\varphi_i) }_{\Gamma_i}^2 \leq 
  \Big( (2 \gamma - 1)^2 C(H,\eta) + \mu^{-1} \Big)
  \norm{ (u_i, \varphi_i) }_{i}^2,
\end{equation}
where
\begin{equation*}
  C(H,\eta) :=
  \left\{
  \begin{array}{ll}
    H & \text{for non-floating subdomain},
    \\
    \frac{1}{H \eta} & 
    \text{for floating subdomain}.
  \end{array}
  \right.
\end{equation*}
We first recall an inequiality related to the coercivity of the IPH
method, that is
\begin{equation}
  \label{eq:A1}
  a_i(v_i,v_i) + 2 a_{i\Gamma}(v_i,\varphi_i) + \mu
  \norm{\varphi_i}_{\Gamma_i}^2
  \geq c \norm{(v_i,\varphi_i)}_{i}^2, 
  \quad \forall \varphi_i \in \Lambda_i, v_i \in V_{h,i}.
\end{equation}
For a proof see \cite{lehrenfeld2010hybrid,hajian2014analysis}.  The proof
of (\ref{eq:A2}) is obtained by choosing $v_i := \Hopt_i(\varphi_i)$
in (\ref{eq:A1}) and recalling the definition of the harmonic
extension which leads to $a_i(v_i,v_i) +
a_{i\Gamma}(v_i,\varphi_i)=0$. Substituting this into (\ref{eq:A1})
proves (\ref{eq:A2}).

In order to prove (\ref{eq:A3}) we decompose the proof into two parts:
floating subdomains and non-floating ones. Recall that
$\norm{(\cdot,\cdot)}_i$ is a semi-norm for floating subdomains if
$\eta = 0$, i.e., the kernel consists of constant functions. This
concludes the proof for floating subdomains with $c(h,H,k) = 0$. For
non-floating subdomains we recall a trace inequality for totally
discontinuous functions, see \cite[Lemma 3.6]{hajian2014analysis} and
\cite{brenner-poincare}:
\begin{lemma} \label{lemma:HDGlowerforB}
Let $\varphi_i \in \Lambda_i$ and $u_i \in V_{h,i}$. Let
$H_i$ be the diameter of a non-floating subdomain. Then we have
\begin{equation} \label{eq:traceineq}
  \frac{c}{H_i} \norm{ \varphi_i }_{\Gamma_i}^2 \leq \norm{ \nabla u_i
  }_{\OM_i}^2 + \mu \norm{ \jump{u_i} }_{\EPS_i \setminus \Gamma_i}^2 + \mu \norm{ u_i
    - \varphi_i }_{\Gamma_i}^2.
\end{equation}
\end{lemma}
We then substitute (\ref{eq:traceineq}) into (\ref{eq:A2}) and
recalling the definition of $\norm{(u_i,\varphi_i)}_i$ proves (\ref{eq:A3})
for non-floating subdomains with $c(h,H,k) = \frac{h}{H}\frac{1}{k^2}$.

We now prove (\ref{eq:RoptEst}). Recall that the $\Lsp^2$-norm of the
$\Ropt_i(\cdot)$ is a norm while $\norm{(\cdot,\cdot)}_i$ is only a
semi-norm for floating subdomains if $\eta=0$. Therefore
(\ref{eq:RoptEst}) makes sense for $\eta>0$. Recall the definition
of the $\Ropt_i(\cdot)$ operator,
\begin{equation*}
  \Ropt_i( \varphi_i ) := 
  \gamma \varphi_i 
  - \frac{1}{2\mu} \Big(\mu - \PDif{}{\NOR_i} \Big) u_i
  = \big( \gamma - \frac{1}{2} \big) \varphi_i 
  + 
   \frac{1}{2} \big( \varphi_i - u_i \big) + \frac{1}{2 \mu}  \PDif{u_i}{\NOR_i},
\end{equation*}
where $u_i = \Hopt_i(\varphi_i)$. We then take the $\Lsp^2$-norm over
$\Gamma_i$ and apply the triangle inequality,
\begin{equation} \label{eq:appendixPROOF}
  \begin{array}{rcl}
  \norm{ \Ropt_i(\varphi_i) }_{\Gamma_i}^2  &\leq&
  4 \big( \gamma - \frac{1}{2} \big)^2 \norm{ \varphi_i }_{\Gamma_i}^2 
  + 4 (\frac{1}{2})^2 \norm{ \varphi_i - u_i }_{\Gamma_i}^2 + 4 \big( \frac{1}{2\mu}
  \big)^2  \norm{ \PDif{u_i}{\NOR_i} }_{\Gamma_i}^2,
  \\
  &\leq& 4 \big( \gamma - \frac{1}{2} \big)^2 \norm{ \varphi_i
  }_{\Gamma_i}^2 +  \norm{ \varphi_i - u_i }_{\Gamma_i}^2 +
  c \mu^{-1} \norm{ \nabla u_i }_{\OM_i}^2,
  \\
  &\leq& 4 \big( \gamma - \frac{1}{2} \big)^2 \norm{ \varphi_i
  }_{\Gamma_i}^2 + c \mu^{-1} \big( \mu \norm{ \varphi_i - u_i }_{\Gamma_i}^2 +
   \norm{ \nabla u_i }_{\OM_i}^2 \big).
  \end{array}
\end{equation}
For non-floating subdomains we use Lemma \ref{lemma:HDGlowerforB} for the
first term on the right-hand side and obtain
\begin{equation} \label{eq:lemmaNONFLOATING}
\Big( (2\gamma - 1 )^2 H_i + c \mu^{-1} \Big) \eta \norm{u_i}_{\Omega_i}^2
+ \norm{ \Ropt_i(\varphi_i) }_{\Gamma_i}^2 \leq \Big( (2\gamma - 1 )^2
H_i + c \mu^{-1} \Big) \norm{(u_i,\varphi_i)}_i^2.
\end{equation}
For floating subdomains we use a trace inequality by Feng and
Karakashian \cite[Lemma 3.1]{karakashian},
  \begin{equation} \label{eq:karakashian}
    \norm{ u_i }_{\Gamma_i}^{2} \leq c 
    \Big[ H_i^{-1} \norm{u_i}_{\Omega_i}^{2} + H_i^{} \big( \norm{ \nabla u_i }_{\Omega_i}^{2} + 
      h^{-1} \norm{\jump{u_i}}_{\EPS_i \setminus \Gamma_i}^{2} \big)
      \Big].
  \end{equation}
We then invoke $\norm{\varphi_i}_{\Gamma_i}^2 \leq 2
\norm{u_i}_{\Gamma_i}^2 + 2 \norm{u_i - \varphi_i}_{\Gamma_i}^2$, use
(\ref{eq:karakashian}) and recall the definition of
$\norm{(u_i,\varphi_i)}$ to obtain
\begin{equation*}
  \norm{\varphi_i}_{\Gamma_i}^2 \leq \frac{C}{H_i \eta} \norm{(u_i,\varphi_i)}_i^2.
\end{equation*}
Substituting this estimate back into (\ref{eq:appendixPROOF}) yields 
\begin{equation} \label{eq:lemmaFLOATING}
  \norm{ \Ropt_i(\varphi_i) }_{\Gamma_i}^2 \leq \Big( (2\gamma - 1 )^2
  \cdot C \cdot (H_i \eta)^{-1} + c \mu^{-1} \Big)
  \norm{(u_i,\varphi_i)}_i^2.
\end{equation}


%

\bibliographystyle{amsplain}
\bibliography{osdg}

\begin{thebibliography}{10}

\bibitem{dgunified}
{\sc Douglas~N. Arnold, Franco Brezzi, Bernardo Cockburn, and L.~Donatella
  Marini}, {\em Unified analysis of discontinuous {G}alerkin methods for
  elliptic problems}, SIAM J. Numer. Anal., 39 (2001/02), pp.~1749--1779.

\bibitem{brenner}
{\sc Susanne~C. Brenner}, {\em The condition number of the {S}chur complement
  in domain decomposition}, Numer. Math., 83 (1999), pp.~187--203.

\bibitem{brenner-poincare}
\leavevmode\vrule height 2pt depth -1.6pt width 23pt, {\em
  Poincar\'e-{F}riedrichs inequalities for piecewise {$H^1$} functions}, SIAM
  J. Numer. Anal., 41 (2003), pp.~306--324.

\bibitem{cockburn}
{\sc Bernardo Cockburn, Jayadeep Gopalakrishnan, and Raytcho Lazarov}, {\em
  Unified hybridization of discontinuous {G}alerkin, mixed, and continuous
  {G}alerkin methods for second order elliptic problems}, SIAM J. Numer. Anal.,
  47 (2009), pp.~1319--1365.

\bibitem{discacciati2004operator}
{\sc Marco Discacciati}, {\em An operator-splitting approach to nonoverlapping
  domain decomposition methods}, Rapport de la Section de Math{\'e}matiques,
  EPFL,  (2004).

\bibitem{dolean}
{\sc Mohamed El~Bouajaji, Victorita Dolean, Martin~J Gander, Stephane Lanteri,
  Ronan Perrussel, et~al.}, {\em D{G} discretization of optimized {S}chwarz
  methods for {M}axwell's equations},  (2013).

\bibitem{ewing}
{\sc Richard~E. Ewing, Junping Wang, and Yongjun Yang}, {\em A stabilized
  discontinuous finite element method for elliptic problems}, Numer. Linear
  Algebra Appl., 10 (2003), pp.~83--104.
\newblock Dedicated to the 60th birthday of Raytcho Lazarov.

\bibitem{karakashian}
{\sc Xiaobing Feng and Ohannes~A. Karakashian}, {\em Two-level additive
  {S}chwarz methods for a discontinuous {G}alerkin approximation of second
  order elliptic problems}, SIAM J. Numer. Anal., 39 (2001), pp.~1343--1365
  (electronic).

\bibitem{ganderos}
{\sc Martin~J. Gander}, {\em Optimized {S}chwarz methods}, SIAM J. Numer.
  Anal., 44 (2006), pp.~699--731 (electronic).

\bibitem{hajian2013block}
{\sc Martin~J. Gander and Soheil Hajian}, {\em Block {J}acobi for discontinuous
  {G}alerkin discretizations: no ordinary {S}chwarz methods}, Domain
  Decomposition Methods in Science and Engineering XXI, Lect. Notes Comput.
  Sci. Eng. Springer,  (2013).

\bibitem{MZA:8194617}
{\sc Claude~J. Gittelson, Ralf Hiptmair, and Ilaria Perugia}, {\em Plane wave
  discontinuous galerkin methods: Analysis of the h-version}, ESAIM:
  Mathematical Modelling and Numerical Analysis, 43 (2009), pp.~297--331.

\bibitem{hajian2014}
{\sc Soheil Hajian}, {\em An optimized {S}chwarz algorithm for discontinuous
  {G}alerkin methods}, Domain Decomposition Methods in Science and Engineering
  XXII,  (2014).

\bibitem{lehrenfeld2010hybrid}
{\sc Christoph Lehrenfeld}, {\em Hybrid discontinuous {G}alerkin methods for
  incompressible flow problems}, master's thesis, RWTH Aachen, 2010.

\bibitem{lui}
{\sc S.~H. Lui}, {\em A {L}ions non-overlapping domain decomposition method for
  domains with an arbitrary interface}, IMA J. Numer. Anal., 29 (2009),
  pp.~332--349.

\bibitem{qin}
{\sc Lizhen Qin and Xuejun Xu}, {\em On a parallel {R}obin-type nonoverlapping
  domain decomposition method}, SIAM Journal on Numerical Analysis, 44 (2006),
  pp.~pp. 2539--2558.

\bibitem{widlund}
{\sc Andrea Toselli and Olof Widlund}, {\em Domain decomposition
  methods---algorithms and theory}, vol.~34 of Springer Series in Computational
  Mathematics, Springer-Verlag, Berlin, 2005.

\bibitem{hesthaven}
{\sc T.~Warburton and J.~S. Hesthaven}, {\em On the constants in {$hp$}-finite
  element trace inverse inequalities}, Comput. Methods Appl. Mech. Engrg., 192
  (2003), pp.~2765--2773.

\end{thebibliography}


\providecommand{\bysame}{\leavevmode\hbox to3em{\hrulefill}\thinspace}
\providecommand{\MR}{\relax\ifhmode\unskip\space\fi MR }
\providecommand{\MRhref}[2]{%
  \href{http://www.ams.org/mathscinet-getitem?mr=#1}{#2}
}
\providecommand{\href}[2]{#2}
\begin{thebibliography}{10}

\bibitem{blanca}
Paola~F. Antonietti and Blanca Ayuso, \emph{Schwarz domain decomposition
  preconditioners for discontinuous galerkin approximations of elliptic
  problems: non-overlapping case}, ESAIM: Mathematical Modelling and Numerical
  Analysis \textbf{41} (2007), 21--54.

\bibitem{dgunified}
Douglas~N. Arnold, Franco Brezzi, Bernardo Cockburn, and L.~Donatella Marini,
  \emph{Unified analysis of discontinuous {G}alerkin methods for elliptic
  problems}, SIAM J. Numer. Anal. \textbf{39} (2001/02), no.~5, 1749--1779.
  \MR{1885715 (2002k:65183)}

\bibitem{brenner-poincare}
Susanne~C. Brenner, \emph{Poincar\'e-{F}riedrichs inequalities for piecewise
  {$H^1$} functions}, SIAM J. Numer. Anal. \textbf{41} (2003), no.~1, 306--324.
  \MR{1974504 (2004d:65140)}

\bibitem{cai1991}
Xiao-Chuan Cai, \emph{Additive {S}chwarz algorithms for parabolic
  convection-diffusion equations}, Numerische Mathematik \textbf{60} (1991),
  no.~1, 41--61 (English).

\bibitem{castillo}
Paul Castillo, \emph{Performance of discontinuous {G}alerkin methods for
  elliptic {PDE}s}, SIAM J. Sci. Comput. \textbf{24} (2002), no.~2, 524--547.
  \MR{1951054 (2003m:65200)}

\bibitem{cockburn}
Bernardo Cockburn, Jayadeep Gopalakrishnan, and Raytcho Lazarov, \emph{Unified
  hybridization of discontinuous {G}alerkin, mixed, and continuous {G}alerkin
  methods for second order elliptic problems}, SIAM J. Numer. Anal. \textbf{47}
  (2009), no.~2, 1319--1365. \MR{2485455 (2010b:65251)}

\bibitem{ewing}
Richard~E. Ewing, Junping Wang, and Yongjun Yang, \emph{A stabilized
  discontinuous finite element method for elliptic problems}, Numer. Linear
  Algebra Appl. \textbf{10} (2003), no.~1-2, 83--104, Dedicated to the 60th
  birthday of Raytcho Lazarov. \MR{1964287 (2004b:65181)}

\bibitem{karakashian}
Xiaobing Feng and Ohannes~A. Karakashian, \emph{Two-level additive {S}chwarz
  methods for a discontinuous {G}alerkin approximation of second order elliptic
  problems}, SIAM J. Numer. Anal. \textbf{39} (2001), no.~4, 1343--1365
  (electronic). \MR{1870847 (2003a:65113)}

\bibitem{ganderos}
Martin~J. Gander, \emph{Optimized {S}chwarz methods}, SIAM J. Numer. Anal.
  \textbf{44} (2006), no.~2, 699--731 (electronic). \MR{2218966 (2007d:65121)}

\bibitem{hajian2013block}
Martin~J. Gander and Soheil Hajian, \emph{Block {J}acobi for discontinuous
  {G}alerkin discretizations: no ordinary {S}chwarz methods}, Domain
  Decomposition Methods in Science and Engineering XXI, Lect. Notes Comput.
  Sci. Eng. Springer (2013).

\bibitem{hajian2014analysis}
Martin~J. Gander and Soheil Hajian, \emph{Analysis of {S}chwarz methods for a
  hybridizable discontinuous {G}alerkin discretization}, SIAM Journal on
  Numerical Analysis \textbf{53} (2015), no.~1, 573--597.

\bibitem{gander2012best}
Martin~J Gander and Felix Kwok, \emph{Best {R}obin parameters for optimized
  {S}chwarz methods at cross points}, SIAM Journal on Scientific Computing
  \textbf{34} (2012), no.~4, A1849--A1879.

\bibitem{gander2013applicability}
\bysame, \emph{On the applicability of {L}ions’ energy estimates in the
  analysis of discrete optimized {S}chwarz methods with cross points}, Domain
  Decomposition Methods in Science and Engineering XX, Springer, 2013,
  pp.~475--483.

\bibitem{gander2015CPDD}
Martin~J. Gander and K{\'e}vin Santugini, \emph{Cross-points in domain
  decomposition methods with a finite element discretization}, revised (2015).

\bibitem{MZA:8194617}
Claude~J. Gittelson, Ralf Hiptmair, and Ilaria Perugia, \emph{Plane wave
  discontinuous galerkin methods: Analysis of the h-version}, ESAIM:
  Mathematical Modelling and Numerical Analysis \textbf{43} (2009), 297--331.

\bibitem{hajian2014}
Soheil Hajian, \emph{An optimized {S}chwarz algorithm for discontinuous
  {G}alerkin methods}, Domain Decomposition Methods in Science and Engineering
  XXII (2014).

\bibitem{hajianthesis}
Soheil Hajian, \emph{Analysis of {S}chwarz methods for discontinuous {G}alerkin
  discretizations}, Ph.D. thesis, 06/04 2015, ID: unige:75225.

\bibitem{lehrenfeld2010hybrid}
Christoph Lehrenfeld, \emph{Hybrid discontinuous {G}alerkin methods for
  incompressible flow problems}, Master's thesis, RWTH Aachen, 2010.

\bibitem{lions}
P.-L. Lions, \emph{On the {S}chwarz alternating method. {III}.\ {A} variant for
  nonoverlapping subdomains}, Third {I}nternational {S}ymposium on {D}omain
  {D}ecomposition {M}ethods for {P}artial {D}ifferential {E}quations
  ({H}ouston, {TX}, 1989), SIAM, Philadelphia, PA, 1990, pp.~202--223.
  \MR{1064345 (91g:65226)}

\bibitem{loisel}
S\'ebastien Loisel, \emph{Condition number estimates for the nonoverlapping
  optimized {S}chwarz method and the 2-{L}agrange multiplier method for general
  domains and cross points}, SIAM Journal on Numerical Analysis \textbf{51}
  (2013), no.~6, 3062--3083.

\bibitem{qin0}
LiZhen Qin, ZhongCi Shi, and XueJun Xu, \emph{On the convergence rate of a
  parallel nonoverlapping domain decomposition method}, Sci. China Ser. A
  \textbf{51} (2008), no.~8, 1461--1478. \MR{2426076 (2010d:65364)}

\bibitem{qin}
Lizhen Qin and Xuejun Xu, \emph{On a parallel {R}obin-type nonoverlapping
  domain decomposition method}, SIAM Journal on Numerical Analysis \textbf{44}
  (2006), no.~6, pp. 2539--2558 (English).

\bibitem{qin1}
Lizhen Qin and Xuejun Xu, \emph{Optimized {S}chwarz methods with {R}obin
  transmission conditions for parabolic problems}, SIAM Journal on Scientific
  Computing \textbf{31} (2008), no.~1, 608--623.

\bibitem{widlund}
Andrea Toselli and Olof Widlund, \emph{Domain decomposition
  methods---algorithms and theory}, Springer Series in Computational
  Mathematics, vol.~34, Springer-Verlag, Berlin, 2005. \MR{2104179
  (2005g:65006)}

\end{thebibliography}
\end{document}